\newtheorem{theorem}{Theorem}[section]
\newtheorem{lem}[theorem]{Lemma}
\theoremstyle{definition}
\newtheorem{asm}{Assumption}
\theoremstyle{remark}
\newtheorem{exmp}[theorem]{Example}
\DeclareMathOperator{\Adm}{Adm}
\DeclareMathOperator{\Mod}{Mod}
\newcommand{\ones}{\mathbf{1}}
\newcommand{\trp}{\text{trp}}
\newcommand{\rv}[1]{\underline{#1}}
\title{Modulus of time-respecting paths}
\author{Nathan Albin and Vikenty Mikheev}
\begin{document}

\maketitle
\begin{abstract}
On a static graph, the $p$-modulus of a family of paths reflects both the lengths of these paths as well as their diversity; a family of many short, disjoint paths has larger modulus than a family of a few long overlapping paths.  In this work, we define a version of $p$-modulus for time-respecting paths on temporal graphs.  This formulation makes use of a time penalty function as a means of discounting paths that take a relatively long time to traverse, thus allowing modulus to capture temporal information about the family as well. By means of a transformation, we show that this temporal $p$-modulus can be recognized as a $p$-modulus problem on a static graph and, therefore, that much of the known theory of $p$-modulus of families of objects can be translated to the case of temporal paths. We demonstrate some properties of temporal modulus on examples. 

\end{abstract}



\section{Introduction}
Discrete $p$-modulus (see, e.g.,~\cite{albin2015modulus,ProbInterp2016,Blocking2018,APDM2016}) has proven to be a powerful and versatile tool for exploring the structure of graphs and networks.  Applications discovered thus far include clustering and community detection~\cite{Cluster2017}, the construction of a large class of graph metrics~\cite{ModMetric}, measures of centrality~\cite{smapcs}, hierarchical graph decomposition~\cite{achppcst}, and the solution to game-theoretic models of secure network broadcast~\cite{albin2019spanning}.  The flexibility of $p$-modulus allows it to be applied to a wide variety of graph types; the essential definitions are easily adapted for directed or undirected graphs, for weighted or unweighted graphs, for simple graphs or multigraphs, or even for hypergraphs.  In this paper, we begin the exploration of $p$-modulus applied to temporal networks---that is, networks whose structure can change over time.

Roughly speaking, $p$-modulus provides quantitative information about a family of objects on a graph.  The concept of object is very flexible, encompassing such concepts as walks, paths, cuts, spanning trees, partitions and flows.  By applying the $p$-modulus framework to different families of objects, one can gain interesting insights into the structure of the underlying graph.  It is expected that, similarly, the application of $p$-modulus to various families of objects on a temporal graph will provide analogous insight.  As a beginning step in this direction, we develop a theory of $p$-modulus for a family of time-respecting paths.

The key contributions of this paper are as follows.
\begin{itemize}
\item We extend the definition of $p$-modulus to families of time-respecting paths on temporal graphs through the introduction of a temporal penalty function.  (Section~\ref{sec:obj_fam_usage})
\item We demonstrate that, with this definition, many important theorems concerning static graphs have direct counterparts on temporal graphs.  (Theorems~\ref{thm:trp-monotone},~\ref{thm:temp-prob-interp} and~\ref{thm:temp-sigma})
\item We establish a connection between the new temporal modulus and the static modulus as the temporal penalty vanishes. (Theorem~\ref{thm:temp-mod-limit})
\item We show, through a series of examples, an idea of what information $p$-modulus is able to capture on a temporal graph. (Section~\ref{sec:examples})
\end{itemize}

The remainder of this paper is organized as follows.  In Section~\ref{sec:mod-paths}, we summarize the definitions and properties of $p$-modulus, especially in the context of path families.  In Section~\ref{sec:mod-trp}, we present a technique for defining $p$-modulus on families of time-respecting paths explore some of the properties of the new temporal modulus.  In Section~\ref{sec:examples}, we present a series of examples establish some heuristics about temporal modulus.

\section{Modulus on static graphs}
\label{sec:mod-paths}

Here, we summarize the relevant definitions and theorems for modulus on static graphs (see, e.g.,~\cite{Blocking2018}). Consider a {\it finite weighted (static) graph} $G=(V,E,\sigma)$ with vertex set $V$, edge set $E$, and positive edge weights $\sigma:E\to(0,\infty)$.  For the definitions that follow, further details, such as whether the graph is directed or undirected, simple or a multigraph, etc., are not particularly important.  It should be understood that these definitions apply to all these types of graph.  The case of unit edge weights, $\sigma\equiv 1$, can be interpreted as the case of an unweighted graph.  In what follows, it will frequently be convenient to adopt linear algebra notation.  To this end, we shall also think of $\sigma$ as a positive abstract vector indexed by the edge set $E$, and write $\sigma\in\mathbb{R}^E_{>0}$.  In order to distinguish this case from that of temporal graphs discussed in the next section, we will sometimes refer to $G=(V,E,\sigma)$ as a \emph{static graph}.

\subsection{Objects, families, usage}
\label{sec:obj_fam_usage}
By a \emph{family of objects} on $G$, we mean a collection, $\Gamma$, of abstract objects defined through some relationship with $G$.  For example, each of the following is a possible choice for $\Gamma$.
\begin{itemize}
    \item The family of spanning trees of $G$.
    \item The family of simple cycles in $G$.
    \item The family of all paths connecting two distinct vertices $s$ and $t$ in $G$.
    \item The family of all walks that traverse at least $k$ edges (where $k$ is some positive integer).
\end{itemize}
In order to keep the narrative simple, we shall assume that $\Gamma$ is a finite set.  A theory can also be established for infinite families, but more care must be taken (see, e.g.,~\cite[Sec.~7]{APDM2016}).

A \emph{usage matrix}, $\mathcal{N}\in\mathbb{R}^{\Gamma\times E}_{\ge 0}$, is an abstract matrix assigning a nonnegative value $\mathcal{N}(\gamma,e)$ to each pair $(\gamma,e)\in\Gamma\times E$.  There is no prescribed rule for making this definition, though the ``natural'' definition often suffices in most cases.  For example, if the objects in $\Gamma$ can be identified with subsets of edges (as in the case of spanning trees, cycles and paths), then a natural definition for $\mathcal{N}$ is
\begin{equation}
    \label{eq:natural-N}
    \mathcal{N}(\gamma,e) =
    \begin{cases}
    1 & \text{if }e\in\gamma,\\
    0 & \text{if }e\notin\gamma.
    \end{cases}
\end{equation}
For walks, it is often useful to store a traversal count instead.  For some sets of objects, it also proves useful to allow $\mathcal{N}$ to take non-integer values.

\subsection{Densities, admissibility and modulus}

By a \emph{density} on $G$, we mean a non-negative vector on the edges, $\rho\in\mathbb{R}^E_{\ge 0}$.  Each density provides a way to assess a type of cost on the objects in $\Gamma$.  It is helpful to think of $\rho(e)$ as a cost per unit usage for edge $e$.  The \emph{total usage costs} incurred by an object $\gamma\in\Gamma$, often referred to as the \emph{$\rho$-length} of $\gamma$, is defined as
\begin{equation*}
    \ell_\rho(\gamma) := \sum_{e\in E}\mathcal{N}(\gamma,e)\rho(e) = (\mathcal{N}\rho)(\gamma).
\end{equation*}
In words, the $\rho$-length of an object is the accumulation of its edge usage costs.

In the theory of $p$-modulus, special attention is paid to the densities that assess a minimum unit cost to all objects in the family of interest.  Such densities are called \emph{admissible} for the family $\Gamma$.  The collection of all admissible densities, $\Adm(\Gamma)$, is defined as
\begin{equation*}
    \Adm(\Gamma) := \{\rho\in\mathbb{R}^E_{\ge0} : \ell_\rho(\gamma)\ge 1\;\forall\gamma\in\Gamma\}.
\end{equation*}
We will often express this set of lengths inequalities in linear algebra notation as $\mathcal{N}\rho\ge\ones$, where $\ones\in\mathbb{R}^{\Gamma}$ is the vector of all ones. To each density $\rho$ is assigned a \emph{$p$-energy} (parametrized by $p\in[1,\infty]$ as well as the edge weights $\sigma$), which is defined as
\begin{equation}\label{eq:energy}
\mathcal{E}_{p,\sigma}(\rho) 
= \begin{cases} \sum \limits_{e \in E} \sigma(e) \rho(e)^p & \mbox{if } 1\le p < \infty, \\ \sup \limits_{e \in E} \sigma(e) \rho(e) & \mbox{if } p=\infty. \end{cases}
\end{equation}
Finally, the \emph{$p$-modulus} of the family $\Gamma$ is defined as the value of an optimization problem:
\begin{equation}
\label{eq:DefMod}
\Mod_{p,\sigma}(\Gamma) = 
\min_{\rho\in\Adm(\Gamma)}
\mathcal{E}_{p,\sigma}(\rho).
\end{equation}

Geometrically, $p$-modulus is simply the distance in $\mathbb{R}^E$ (measured by a weighted $p$-norm) between the origin and the closed, convex set $\Adm(\Gamma)$.  From this, it is clear that the minimum is attained.  A density $\rho^*\in\Adm(\Gamma)$ whose energy coincides with the $p$-modulus is called an \emph{optimal density}.  When $p\in(1,\infty)$, the strict convexity of the $p$-norm implies that there is a unique optimal $\rho^*$.

\subsection{An example}\label{sec:mod-example}

The following is a helpful example found, e.g., in~\cite{albin2015modulus}.  Consider a graph composed of $k$ parallel paths, each with $\ell$ hops, connecting a vertex $s$ to another vertex $t$.  For simplicity, let $\sigma\equiv 1$.  For the family $\Gamma$, we choose $\Gamma=\Gamma(s,t)$, the family of paths connecting from $s$ to $t$ (so $|\Gamma|=k$).  From symmetry arguments, it is easy to guess that an optimal density is $\rho^*=\frac{1}{\ell}$, regardless the value of $p$ and, therefore, that
\begin{equation*}
    \Mod_{p,1}(\Gamma(s,t)) = \mathcal{E}_{p,1}(\rho^*) = k\ell\left(\frac{1}{\ell}\right)^p = \frac{k}{\ell^{p-1}}.
\end{equation*}
This demonstrates a general heuristic that modulus is made large by a combination of ``diversity'' (in this case, more paths from $s$ to $t$) and ``shortness'' (in this case, shorter paths from $s$ to $t$).  The parameter $p$ adjusts the relative sensitivity of modulus to these two quantities: smaller $p$ for more sensitivity to diversity, larger $p$ for more sensitivity to shortness.

\subsection{Interpretations of modulus}

In practice, both the value $\Mod_{p,\sigma}(\Gamma)$ as well as the value of the optimal density $\rho^*$ contain useful information about the underlying graph $G$.  Some intuition about the meaning of modulus can be developed by considering specific examples.  In the next section, we will use the intuition gathered here to propose method for adapting $p$-modulus to families of objects on temporal graphs.

\subsubsection{Modulus as dissipated power}

For $p\in(1,\infty)$, it is possible to relate $p$-modulus to the dissipated power in a (generally nonlinear) resistor network.  In this interpretation, we view the edges of the graph as nonlinear resistors and the vertices as junctions at which these resistors are joined.  Along each resistor, we assume a relationship between the potential drop and current flow of the form
\begin{equation*}
    \text{current} = \sigma\times\text{voltage}^{p-1}.
\end{equation*}
(In the case $p=2$, this is simply Ohm's law for a resistor with conductance $\sigma$.)

Now consider distinct vertices $s,t\in V$ and suppose a unit voltage potential drop is induced between these two junctions.  From Dirichlet's minimum power principle, one would expect a steady-state response determined by a potential $\phi\in\mathbb{R}^V$ (the voltage at each junction) which minimizes the total dissipated power, leading to the following optimization problem.
\begin{equation}
\label{eq:nonlinear-power}
\begin{split}
    \underset{\phi\in\mathbb{R}^V}{\text{minimize}}\quad&\sum_{\{x,y\}\in E}\sigma(\{x,y\})|\phi(x)-\phi(y)|^p\\
    \text{subject to}\quad&
    \phi(s) = 0,\;\phi(t) = 1.
\end{split}
\end{equation}
Notice that, for each edge $e=\{x,y\}\in E$, we are associating $\sigma(e)$ with the conductance of the resistor assigned to that edge.
The relationship to $p$-modulus is expressed in the following Theorem~\cite[Thm.~4.2]{albin2015modulus}.
\begin{theorem}
Let $p\in(1,\infty)$, let $s,t\in V$ be distinct vertices, and let $\Gamma=\Gamma(s,t)$ be the family of paths connecting $s$ to $t$ in and undirected graph $G$, with $\mathcal{N}$ defined as in~\eqref{eq:natural-N}.  Let $\rho^*$ be the unique optimal density for $p$-modulus, and let $\phi^*$ be the unique optimal potential for~\eqref{eq:nonlinear-power}.  Then
\begin{equation*}
    \rho^*(e) = |\phi^*(x)-\phi^*(y)|\quad
    \text{for all }e=\{x,y\}\in E.
\end{equation*}
Consequently,
\begin{equation*}
    \Mod_{p,\sigma}(\Gamma)
    = \mathcal{E}_{p,\sigma}(\rho^*)
    = \sum_{\{x,y\}\in E}
    \sigma(\{x,y\})|\phi^*(x)-\phi^*(y)|^p.
\end{equation*}
\end{theorem}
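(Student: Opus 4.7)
The plan is to verify two things about $\rho^{*}(e) := |\phi^{*}(x)-\phi^{*}(y)|$: that it is admissible for $\Gamma(s,t)$, and that it minimizes $\mathcal{E}_{p,\sigma}$ over $\Adm(\Gamma(s,t))$. Admissibility is the easy step: for any path $\gamma = (s = v_0, v_1, \ldots, v_n = t)$, the triangle inequality together with telescoping gives
\begin{equation*}
\ell_{\rho^{*}}(\gamma) = \sum_{i=1}^{n}\lvert\phi^{*}(v_{i-1}) - \phi^{*}(v_i)\rvert \;\geq\; \biggl|\sum_{i=1}^{n}\bigl(\phi^{*}(v_i) - \phi^{*}(v_{i-1})\bigr)\biggr| = |\phi^{*}(t) - \phi^{*}(s)| = 1,
\end{equation*}
so $\rho^{*} \in \Adm(\Gamma)$ and $\Mod_{p,\sigma}(\Gamma) \leq \mathcal{E}_{p,\sigma}(\rho^{*})$.

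For the reverse inequality, I would associate with $\phi^{*}$ its induced ``current,'' setting $j(x,y) := \sigma(\{x,y\})\bigl(\phi^{*}(y) - \phi^{*}(x)\bigr)^{\langle p-1\rangle}$, where $t^{\langle r\rangle} := |t|^{r-1}t$. The stationarity (Euler--Lagrange) condition for~\eqref{eq:nonlinear-power} at an interior vertex $v$ translates into the vanishing of the discrete divergence of $j$ at $v$, so that $j$ is a genuine $s$-$t$ flow with some value $F\ge 0$. Discrete summation by parts, using $\phi^{*}(s)=0$ and $\phi^{*}(t)=1$, yields
\begin{equation*}
F = \sum_{\{x,y\}\in E}\bigl(\phi^{*}(y) - \phi^{*}(x)\bigr)\,j(x,y) = \sum_{e\in E}\sigma(e)\rho^{*}(e)^{p} = \mathcal{E}_{p,\sigma}(\rho^{*}).
\end{equation*}

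Orienting each edge in the direction of decreasing $\phi^{*}$ turns $|j|$ into a nonnegative acyclic flow, so the classical flow-decomposition theorem writes $|j(e)| = \sum_{k}\mu_k\mathcal{N}(\gamma_k,e)$ for $s$-$t$ paths $\gamma_k$ with weights $\mu_k \geq 0$ satisfying $\sum_k \mu_k = F$. For any admissible $\rho$, summing in two ways gives
\begin{equation*}
F = \sum_k \mu_k \;\leq\; \sum_k \mu_k\,\ell_\rho(\gamma_k) = \sum_e \rho(e)\,|j(e)|.
\end{equation*}
Applying Hölder's inequality with exponents $p$ and $q = p/(p-1)$, and noting that $\sigma(e)^{-q/p}|j(e)|^{q} = \sigma(e)\rho^{*}(e)^{p}$ because $(p-1)q = p$, one obtains
\begin{equation*}
\sum_e \rho(e)\,|j(e)| \;\leq\; \mathcal{E}_{p,\sigma}(\rho)^{1/p}\,\mathcal{E}_{p,\sigma}(\rho^{*})^{1/q}.
\end{equation*}
Combining the last two displays with $F = \mathcal{E}_{p,\sigma}(\rho^{*})$ gives $\mathcal{E}_{p,\sigma}(\rho^{*}) \leq \mathcal{E}_{p,\sigma}(\rho)$, so $\rho^{*}$ is optimal; strict convexity of $\mathcal{E}_{p,\sigma}$ on $\Adm(\Gamma)$ for $p\in(1,\infty)$ then forces $\rho^{*}$ to be the unique optimum, and the stated formula for $\Mod_{p,\sigma}(\Gamma)$ is immediate from the definition of $\rho^{*}$.

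The main obstacle is the flow-decomposition step: one must check that orienting each edge in the direction of decreasing $\phi^{*}$ (with arbitrary choice on edges where $\phi^{*}$ is constant, on which $j=0$ anyway) yields a directed acyclic network in which $|j|$ is a nonnegative divergence-free flow from $s$ to $t$ of value $F$, so that the standard decomposition into simple $s$-$t$ path flows is available. The rest of the argument is bookkeeping around Hölder and summation by parts.
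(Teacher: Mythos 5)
Your argument is correct, and it is worth noting that the paper itself supplies no proof of this statement: it is quoted directly from~\cite[Thm.~4.2]{albin2015modulus}, so there is nothing in the text to compare against line by line. What you have written is a self-contained Dirichlet--Thomson-type duality argument: admissibility of $|\phi^*(x)-\phi^*(y)|$ by telescoping, then optimality by pairing the induced $p$-current $j$ against an arbitrary admissible $\rho$ via flow decomposition and H\"older. The checks all go through: the Euler--Lagrange condition for~\eqref{eq:nonlinear-power} is legitimate since $|t|^p$ is $C^1$ for $p>1$; the summation-by-parts identity correctly gives $F=\mathcal{E}_{p,\sigma}(\rho^*)$; the exponent bookkeeping $(p-1)q=p$ makes the H\"older factor collapse to $\mathcal{E}_{p,\sigma}(\rho^*)^{1/q}$; and the final division is licit because $\rho^*$ is admissible for a nonempty path family, hence has strictly positive energy. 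The step you flag as the main obstacle is indeed fine: once edges carrying zero current are discarded, the potential is strictly monotone along each remaining oriented edge, so the orientation is acyclic and the standard decomposition of a conservative nonnegative flow into simple source-to-sink path flows applies, with every extracted path lying in $\Gamma(s,t)$. One small sign slip: with your convention $j(x,y)=\sigma(\{x,y\})(\phi^*(y)-\phi^*(x))^{\langle p-1\rangle}$, the current is positive in the direction of \emph{increasing} $\phi^*$ (from $s$, where $\phi^*=0$, toward $t$, where $\phi^*=1$), so edges should be oriented by increasing rather than decreasing potential; otherwise $|j|$ is a $t$-to-$s$ flow and the decomposition produces paths in the wrong direction. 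This is purely notational and does not affect the argument. As a bonus, your construction exhibits $\mu_k/F$ as an explicit pmf on $\Gamma(s,t)$ achieving the bound in Theorem~\ref{thm:prob-interp}, so the same computation gives the probabilistic duality for connecting paths essentially for free.
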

In words, the $p$-modulus for the \emph{family of connecting paths} $\Gamma=\Gamma(s,t)$ coincides with the minimum dissipated power of the nonlinear resistor network.  Moreover, the optimal density $\rho^*(e)$ coincides with the absolute potential difference across the resistor associated with edge $e$.

\subsubsection{Modulus as a metric}

A related way of understanding modulus is through its connection to graph metrics.  As described in~\cite{Blocking2018}, if $G$ is an undirected graph then, for any $p\in(1,\infty)$ with conjugate H\"older exponent $q=p/(p-1)$, the function $\delta_p:V\times V\to\mathbb{R}$ defined as
\begin{equation*}
    \delta_p(x,y) = 
    \begin{cases}
    0 & \text{if }x = y,\\
    \Mod_{p,\sigma}(\Gamma(x,y))^{-\frac{q}{p}}
    & \text{if }x \ne y
    \end{cases}
\end{equation*}
is a metric on the vertices.  In the special case $p=q=2$, this metric coincides with the effective resistance metric due to the connection established above.  Moreover, it was shown that when $p=1$, $\Mod_{1,\sigma}(\Gamma(x,y))^{-1}$ coincides with the minimum cut metric (with edge capacities provided by $\sigma$) and when $p=\infty$, $\Mod_{\infty,\sigma}(\Gamma(x,y))^{-1}$ coincides with the shortest-path metric with edge lengths provided by $\sigma^{-1}$.

\subsubsection{The probabilistic interpretation of modulus and expected overlap}
\label{sec:prob-interp}

Finally, we review briefly the probabilistic interpretation of $p$-modulus developed in~\cite{ProbInterp2016,Blocking2018}.  Adopting the notation of the references, we fix a family $\Gamma$ and consider a random object $\rv{\gamma}\in\Gamma$.  (The underline notation is used to distinguish the random object $\rv{\gamma}$ from its possible instantiations $\gamma$.)  This random object is modeled through its distribution, or law, which takes the form of a probability mass function (pmf) $\mu:\Gamma\to\mathbb{R}$ that assigns to each object in $\Gamma$ its probability of selection.  That is, we think of $\mu$ as the probability
\begin{equation*}
    \mu(\gamma) = \mathbb{P}_\mu(\underline{\gamma}=\gamma),
\end{equation*}
with the subscripted $\mu$ indicating that the probability depends on the choice of distribution.  We denote by $\mathcal{P}(\Gamma)$ the set of all possible pmfs on $\Gamma$.  That is,
\begin{equation*}
    \mathcal{P}(\Gamma) =
    \left\{\mu\in\mathbb{R}^{\Gamma}_{\ge 0}
    : \mu^T\ones=1\right\}.
\end{equation*}
We shall use the notation $\underline{\gamma}\sim\mu$ to indicate the relationship between a random variable and its underlying pmf.

For a random object $\underline{\gamma}\sim\mu$ and a fixed edge $e\in E$, the quantity $\mathcal{N}(\underline{\gamma},e)$ is a real-valued random variable.  Its expectation, called the \emph{expected edge usage} for edge $e$ plays a special role in this interpretation of modulus, and is identified by the symbol $\eta(e)$.  That is,
\begin{equation*}
\eta(e) = \mathbb{E}_\mu(\mathcal{N}(\rv{\gamma},e))
= \sum_{\gamma\in\Gamma}\mathcal{N}(\gamma,e)\mu(\gamma) = (N^T\mu)(e).
\end{equation*}
In the special case that $\mathcal{N}$ takes the form~\eqref{eq:natural-N}, $\eta(e)$ can be understood as a marginal probability:
\begin{equation*}
\eta(e) = \sum_{\gamma\in\Gamma}\mathbbm{1}_{e\in\gamma}\mu(\gamma) = \mathbb{P}_\mu(e\in\rv{\gamma}).
\end{equation*}

The probabilistic interpretation of modulus developed in~\cite{ProbInterp2016,Blocking2018} is summarized in the following theorem.
\begin{theorem}\label{thm:prob-interp}
Let $\Gamma$ be a family of objects on $G$ and let $p\in(1,\infty)$ and $q=p/(p-1)$.  Define the conjugate edge weights $\hat{\sigma}(e)=\sigma(e)^{-\frac{q}{p}}$.  Then,
\begin{equation*}
\Mod_{p,\sigma}(\Gamma) = 
\left(\min_{\substack{\mu\in\mathcal{P}(\Gamma)\\\eta=\mathcal{N}^T\mu}}
\sum_{e\in E}\hat{\sigma}(e)\eta(e)^q\right)^{-\frac{p}{q}}
=
\left(\min_{\substack{\mu\in\mathcal{P}(\Gamma)\\\eta=\mathcal{N}^T\mu}}
\mathcal{E}_{q,\hat{\sigma}}(\eta)\right)^{-\frac{p}{q}}.
\end{equation*}
Moreover, while there may be infinitely many optimal pmfs $\mu^*$ for the right-hand side, there is a unique optimal $\eta^*=\mathcal{N}^T\mu^*$ that is related to the optimal $\rho^*$ from $p$-modulus as follows.
\begin{equation*}
\frac{\sigma(e)\rho^*(e)^p}{\mathcal{E}_{p,\sigma}(\rho^*)} =
\frac{\hat{\sigma}(e)\eta^*(e)^q}{\mathcal{E}_{q,\hat{\sigma}}(\eta^*)}.
\end{equation*}
\end{theorem}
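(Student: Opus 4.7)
The plan is to prove one inequality via Hölder's inequality and the matching inequality via Lagrangian duality, then read off the edge-by-edge identity from the equality conditions.

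For the ``easy'' direction, I would pair any $\rho \in \Adm(\Gamma)$ with any $\mu \in \mathcal{P}(\Gamma)$. Admissibility gives $\mathcal{N}\rho \ge \ones$, and combined with $\mu \ge 0$ and $\mu^T\ones = 1$ this yields $1 \le \mu^T \mathcal{N}\rho = \eta^T\rho$, where $\eta = \mathcal{N}^T\mu$. Factoring $\eta(e)\rho(e) = [\sigma(e)^{1/p}\rho(e)] \cdot [\sigma(e)^{-1/p}\eta(e)]$ and applying Hölder with conjugate exponents $p$ and $q$, using $\hat\sigma(e) = \sigma(e)^{-q/p}$, gives $1 \le \mathcal{E}_{p,\sigma}(\rho)^{1/p}\mathcal{E}_{q,\hat\sigma}(\eta)^{1/q}$. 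Rearranging and minimizing separately over $\rho$ and over $\mu$ yields $\Mod_{p,\sigma}(\Gamma) \ge \bigl(\min_\mu \mathcal{E}_{q,\hat\sigma}(\eta)\bigr)^{-p/q}$.

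For the matching bound, I would use Lagrangian duality. The modulus problem is a strictly convex minimization with linear inequality constraints, and Slater's condition is trivial since any sufficiently large $\rho$ is strictly admissible, so strong duality produces an optimal multiplier $\lambda^* \ge 0$ dual to $\mathcal{N}\rho \ge \ones$. Stationarity at the unique optimum $\rho^*$ gives $p\,\sigma(e)(\rho^*(e))^{p-1} = (\mathcal{N}^T\lambda^*)(e)$ wherever $\rho^*(e) > 0$, and excluding the degenerate case $\Mod_{p,\sigma}(\Gamma) = 0$ forces $\lambda^{*T}\ones > 0$ by complementary slackness. Normalizing $\mu^* = \lambda^*/(\lambda^{*T}\ones)$ then yields a pmf whose $\eta^* = \mathcal{N}^T\mu^*$ is proportional to $\sigma \cdot (\rho^*)^{p-1}$, which is precisely the equality condition in the Hölder step above. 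The chain of inequalities collapses, the complementary slackness $\lambda^*(\gamma) > 0 \Rightarrow (\mathcal{N}\rho^*)(\gamma) = 1$ supplies the remaining equality $\mu^{*T}\mathcal{N}\rho^* = 1$, and the claimed identity of optimal values follows.

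The edge-by-edge identity follows by direct computation: substituting $\eta^*(e) = c\,\sigma(e)(\rho^*(e))^{p-1}$ into $\hat\sigma(e)\eta^*(e)^q$ and using $\hat\sigma(e) = \sigma(e)^{1-q}$ shows that $\hat\sigma(e)\eta^*(e)^q$ and $\sigma(e)(\rho^*(e))^p$ are proportional with the same edge-independent constant $c^q$, so normalizing by the total $p$- and $q$-energies establishes the stated equation. Uniqueness of $\eta^*$, despite possible non-uniqueness of $\mu^*$, follows from strict convexity of $\mathcal{E}_{q,\hat\sigma}$ (since $q > 1$) on the convex image $\{\mathcal{N}^T\mu : \mu \in \mathcal{P}(\Gamma)\}$. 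The principal obstacle I anticipate is a clean treatment of edges where $\rho^*(e) = 0$: stationarity must be written there as an inequality (via a nonnegativity multiplier or subgradient), and one must confirm that both sides of the final identity reduce to $0$ on such edges so that the Hölder equality is applied only on the support of $\rho^*$.
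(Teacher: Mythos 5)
The paper does not prove this theorem itself; it is quoted as a known result from the cited references, so there is no in-paper argument to compare against. Your proposal is correct and is essentially the standard proof from those references: weak duality via H\"older's inequality applied to $1\le\mu^T\mathcal{N}\rho=\eta^T\rho$, followed by the KKT/complementary-slackness construction of $\mu^*$ from the optimal Lagrange multipliers, with the equality case of H\"older yielding $\hat{\sigma}(e)\eta^*(e)^q\propto\sigma(e)\rho^*(e)^p$ and strict convexity of $\mathcal{E}_{q,\hat{\sigma}}$ giving uniqueness of $\eta^*$. Your handling of edges with $\rho^*(e)=0$ is also the right concern and resolves as you expect, since nonnegativity of $\mathcal{N}$ and $\lambda^*$ forces $(\mathcal{N}^T\lambda^*)(e)=0$ there.
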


When $\sigma\equiv 1$, $p=2$ and $\mathcal{N}$ has the form~\eqref{eq:natural-N}, this probabilistic interpretation has a particularly intuitive meaning.  Given any pmf $\mu$, we may consider two independent, identically distributed random objects $\rv{\gamma},\rv{\gamma}'\sim\mu$.  If we think of these two objects as subsets of $E$, then it can be seen that
\begin{equation*}
    \sum_{e\in E}\eta(e)^2 = \mathbb{E}_\mu(|\rv{\gamma}\cap\rv{\gamma}'|),
\end{equation*}
the expected size of the intersection between the two objects.  This is often called the \emph{expected overlap} of the objects.  In this setting, the $p$-modulus problem is equivalent to choosing a pmf $\mu$ so that this expected overlap is minimized.

There are also versions of Theorem~\ref{thm:prob-interp} for the cases $p\in\{1,\infty\}$.  These are necessarily weaker due to lack of uniqueness.

\begin{theorem}
\label{thm:prob-interp-special}
Let $\Gamma$ be a family of objects on $G$, then
\begin{equation*}
\Mod_{1,\sigma}(\Gamma)^{-1} = 
\min_{\substack{\mu\in\mathcal{P}(\Gamma)\\\eta=\mathcal{N}^T\mu}}
\mathcal{E}_{\infty,\sigma^{-1}}(\eta)\qquad\text{and}\qquad
\Mod_{\infty,\sigma}(\Gamma)^{-1} = 
\min_{\substack{\mu\in\mathcal{P}(\Gamma)\\\eta=\mathcal{N}^T\mu}}
\mathcal{E}_{1,\sigma^{-1}}(\eta).
\end{equation*}
\end{theorem}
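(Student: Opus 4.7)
The plan is to derive both identities via linear programming (LP) duality, since at the extremes $p=1$ and $p=\infty$ the modulus problem~\eqref{eq:DefMod} is a finite linear program rather than a strictly convex one. This is a structural difference from the case $p\in(1,\infty)$ in Theorem~\ref{thm:prob-interp}, which explains why the present statement is weaker (no uniqueness, no pointwise relationship between $\rho^*$ and $\eta^*$).

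For $p=1$, I would write the primal as: minimize $\sigma^T\rho$ subject to $\rho\ge 0$ and $\mathcal{N}\rho\ge\ones$. Its LP dual introduces a nonnegative multiplier $\lambda_\gamma$ for each admissibility constraint and reads: maximize $\lambda^T\ones$ subject to $\lambda\ge 0$ and $\mathcal{N}^T\lambda\le\sigma$ componentwise. Assuming primal feasibility, strong duality applies. The key step is a change of variables: set $Z=\lambda^T\ones$ and, when $Z>0$, write $\mu=\lambda/Z\in\mathcal{P}(\Gamma)$, so that $\mathcal{N}^T\lambda=Z\eta$. The dual constraint $Z\eta(e)\le\sigma(e)$ for all $e$ then becomes $Z\le 1/\mathcal{E}_{\infty,\sigma^{-1}}(\eta)$, and for fixed $\mu$ the objective $Z$ is maximized at equality. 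Taking the supremum over $\mu\in\mathcal{P}(\Gamma)$ and inverting yields the first identity.

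For $p=\infty$, I would rewrite the primal using an epigraph variable $t$: minimize $t$ subject to $\rho\ge 0$, $\sigma(e)\rho(e)\le t$ for all $e$, and $\mathcal{N}\rho\ge\ones$. The LP dual introduces multipliers $\nu_e\ge 0$ for the $t$-constraints and $\lambda_\gamma\ge 0$ for admissibility, with stationarity in $t$ forcing $\sum_e\nu_e=1$ and stationarity in $\rho$ forcing $\mathcal{N}^T\lambda\le\sigma\cdot\nu$ (pointwise product). Applying the same substitution $Z=\lambda^T\ones$ and $\mu=\lambda/Z$, feasibility with $\sum_e\nu_e=1$ forces $Z\sum_e\eta(e)/\sigma(e)\le 1$, with equality achievable by choosing $\nu_e$ proportional to $\eta(e)/\sigma(e)$. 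Thus $\Mod_{\infty,\sigma}(\Gamma)=\max_\mu\mathcal{E}_{1,\sigma^{-1}}(\eta)^{-1}$, which rearranges to the second identity.

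The main obstacle is bookkeeping rather than any genuine technical difficulty: one must justify strong duality (standard for finite LPs with a finite primal optimum) and handle the edge cases where some $\eta(e)=0$ or where $\Gamma$ is trivial. Once the two duals are in hand, the identification of $\max_e\eta(e)/\sigma(e)$ and $\sum_e\eta(e)/\sigma(e)$ with the conjugate energies $\mathcal{E}_{\infty,\sigma^{-1}}(\eta)$ and $\mathcal{E}_{1,\sigma^{-1}}(\eta)$ is immediate from definition~\eqref{eq:energy}.
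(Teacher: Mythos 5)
Your LP-duality argument is correct, and it is essentially the standard route: the paper itself states Theorem~\ref{thm:prob-interp-special} without proof, citing~\cite{ProbInterp2016,Blocking2018}, and those references establish it exactly as you do --- by passing to the Lagrangian/LP dual of the admissibility-constrained minimization and normalizing the dual variables $\lambda$ into a pmf $\mu=\lambda/(\lambda^T\ones)$ so that $\mathcal{N}^T\lambda=Z\eta$. Your handling of the $p=\infty$ case via the epigraph reformulation and the multipliers $\nu_e$ summing to one likewise matches the reference argument, so no further comparison is needed.
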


Using the probabilistic interpretation, we can gain more insight into the earlier observation that modulus measures a balance between diversity and shortness.  To see this, consider an arbitrary family $\Gamma$ and note that the constant density $\rho=(\min\mathcal{N}\ones)^{-1}\ones$ (the reciprocal of the minimum row sum of $\mathcal{N}$) is admissible.  For $p\in(1,\infty)$, this implies that
\begin{equation*}
\Mod_{p,\sigma}(\Gamma) \le
\frac{\sigma(E)}{(\min\mathcal{N}\ones)^p},
\end{equation*}
where
\begin{equation*}
\sigma(E) = \sum\limits_{e\in E}\sigma(e).
\end{equation*}
When $\mathcal{N}$ has large row sums (corresponding to ``long'' or ``heavy'' objects), this forces the modulus to be small.

On the other hand, the uniform distribution $\mu=|\Gamma|^{-1}\ones$ is one possible pmf for Theorem~\ref{thm:prob-interp}, which provides a lower bound on modulus.  With this choice,
\begin{equation*}
\eta(e) = |\Gamma|^{-1}(\mathcal{N}^T\ones)(e)
\le |\Gamma|^{-1}\max\mathcal{N}^T\ones,
\end{equation*}
where $\max\mathcal{N}^T\ones$ is the maximum column sum of $\mathcal{N}$.  Thus,
\begin{equation*}
\Mod_{p,\sigma}(\Gamma)^{-1} \le
\frac{\hat{\sigma}(E)^{\frac{p}{q}}}{|\Gamma|^p}(\max\mathcal{N}^T\ones)^p.
\end{equation*}
When $\mathcal{N}$ has small column sums (corresponding to edges that are not used by many objects), this forces the modulus to be large.

%
%



\section{Modulus on temporal graphs}
\label{sec:mod-trp}

Having reviewed the definition of modulus and some of its interpretations, we now focus our attention on extending the $p$-modulus framework to objects on temporal graphs.

\subsection{A model for temporal networks}

In order to demonstrate the use of $p$-modulus on temporal networks, we must first choose a suitable network model.  For this paper, we have chosen the ``contact sequence'' graph described in~\cite[Sec.~3]{holme_temporal_networks_2012}.  Conceptually, this type of graph can be represented as a quadruple $G=(V,E,\sigma,T)$, where the added property $T$ encodes the temporal information.  For this paper, we will consider $T$ as a function $T:E\to 2^{\mathbb{R}}$; to each edge, $e\in E$, $T(e)$ assigns a discrete, positive set of times $t\in\mathbb{R}_{>0}$ during which $e$ is available for use.  To simplify the discussion, we shall assume that $T(e)$ is finite for each $e$.  The associated graph $\check{G}=(V,E,\sigma)$ with temporal information stripped away is often called the \emph{static graph} or \emph{aggregated graph}.


\subsection{Objects on temporal networks}

The next step in extending $p$-modulus to temporal networks is to select a family of objects, $\Gamma$.  Intuitively, each object should be somehow associated with the graph $G$ and should take into account the temporal information.  As in the non-temporal setting, it is evident that there is a great deal of flexibility in defining an object.  A fairly broad class of families can be obtained by considering subsets of edge-time pairs.  A concrete example of such a family is the family of \emph{time respecting paths} between $s$ and $t$ as described in~\cite[Sec.~4.2]{holme_temporal_networks_2012}.  These are sets of the form $\{(e_1,t_1),(e_2,t_2),\ldots,(e_k,t_k)\}$ where the edges $e_1e_2\cdots e_k$ form a path (traversed in order) from $s$ to $t$, and the times satisfy $t_1<t_2<\cdots<t_k$ and $t_i\in T(e_i)$ for all $i$.  In this example, one sees the connection between the underlying graph structure (the edges must trace a path from $s$ to $t$) and the temporal data (the edges must be traversed in a prescribed temporal order).

\subsection{Density and energy}

There are two relatively natural ways to define the concept of density on a temporal graph.  The first is to assign a value $\rho(e)$ to each edge $e\in E$.  The second is to assign a value $\rho(e,t)$ to each edge-time pair.  A practical way to accommodate either option, or even a mixture of both, is to allow $G$ to have parallel edges.  In other words, $G$ (and, therefore, $\check{G}$) is a multigraph with edge set $E$.  Each edge $e$ has a set $T(e)$ of one or more availability times and is assigned a value $\rho(e)$ by a density $\rho\in\mathbb{R}^E_{\ge 0}$.
In this way, the definition of energy~\eqref{eq:energy} remains unchanged in the temporal case.  All that remains in order to define a temporal version of modulus, then, is to find a suitable definition for the usage matrix $\mathcal{N}\in\mathbb{R}^{\Gamma\times E}$.

\subsection{Usage for temporal objects}
\label{sec:temporal-usage}

Here, we describe possible strategies for defining a temporal version of usage, assuming a particular structure on the family of temporal objects.  Our goal is not to develop a single generic approach for all families of temporal objects, but instead to provide the basis for a temporal $p$-modulus framework by way of example.

Specifically, we shall consider a family $\Gamma$ whose objects can be identified with sets of the form
\begin{equation*}
    \gamma = \{(e_1,t_1),(e_2,t_2),\ldots,(e_k,t_k)\}\quad\text{with}\quad
    t_i\in T(e_i)\text{ for }i=1,2,\ldots,k.
\end{equation*}
In words, the objects we consider will be sets of edge-time pairs with the requirement that the time occurring in the pair is one of the available times of the corresponding edge.  To simplify the discussion, we shall also assume that no edge appears more than once in a given object (i.e., that the edges $e_i$ are unique in $\gamma$).  This class of family is a natural generalization of the time respecting paths.

Associated with each object $\gamma\in\Gamma$ is a natural projection $\check{\gamma}\in 2^E$ defined as
\begin{equation*}
    \check{\gamma} = \{e_1,e_2,\ldots,e_k\}.
\end{equation*}
Since we assumed that $\gamma$ cannot have repeated edges, it follows that $|\check{\gamma}|=|\gamma|$.  In this way, we can think of $\check{\gamma}$ as a projected object on the aggregated graph $\check{G}$.  The collection of all projections of temporal objects in $\Gamma$ forms a family of objects on $\check{G}$, which we denote $\check{\Gamma}$.  That is,
\begin{equation*}
\check{\Gamma} = \{\check{\gamma}:\gamma\in\Gamma\}.
\end{equation*}

Since each $\check{\gamma}\in\check{\Gamma}$ is a subset of $E$, a natural choice of usage matrix for this aggregated family is $\check{\mathcal{N}}$ of the form~\eqref{eq:natural-N}.  We seek to define a usage matrix $\mathcal{N}$ on $\Gamma$ by incorporating temporal information into $\check{\mathcal{N}}$.  We shall do this by means of a strictly monotonic \emph{temporal penalty function} (or \emph{time-penalty function}) $\varphi:\mathbb{R}\to\mathbb{R}_{\ge 0}$.  In general, we will think of $\varphi$ as monotonically increasing, though it is possible to envision applications wherein a monotonically decreasing $\varphi$ would be of interest.

\subsubsection{Per-edge time penalization}

In order to find a suitable usage matrix, one might consider modifying the concept of $\rho$-length by incorporating temporal information independently on each edge.  For example, given a density $\rho$ and an object $\gamma\in\Gamma$, one could define
\begin{equation*}
\ell_\rho(\gamma) = 
\sum_{i=1}^k f(\rho(e_i),\varphi(t_i)).
\end{equation*}
In order to apply the established $p$-modulus apparatus to the family of temporal objects, we need to find a usage matrix $\mathcal{N}\in\mathbb{R}^{\Gamma\times E}_{\ge 0}$ with the property that
\begin{equation*}
\ell_\rho(\gamma)\ge 1\quad\iff\quad
(\mathcal{N}\rho)(\gamma)\ge 1.
\end{equation*}
There is no reason to expect such a matrix to exist in general, but for certain choices of $f$ it does.

For example, if $f$ is the multiplication operator, then
\begin{equation*}
\ell_\rho(\gamma) = \sum_{i=1}^k\rho(e_i)\varphi(t_i)
= \sum_{e\in E}\mathcal{N}(\gamma,e)\rho(e),
\end{equation*}
where
\begin{equation}
    \label{eq:N-per-edge-mult}
    \mathcal{N}(\gamma,e) = 
    \begin{cases}
        \varphi(t)&\text{if }(e,t)\in\gamma,\\
        0 &\text{if }e\notin\check{\gamma}.
    \end{cases}
\end{equation}

If $f$ instead is the addition operator then
\begin{equation*}
\ell_\rho(\gamma) =
\ell_\rho(\check{\gamma}) + \sum_{i=1}^k\varphi(t_i).
\end{equation*}
Requiring that $\ell_\rho(\gamma)\ge 1$ is equivalent to requiring that
\begin{equation*}
    \ell_\rho(\check{\gamma})\ge
    1 - \sum_{i=1}^k\varphi(t_i).
\end{equation*}
Since $\rho$ and $\varphi$ are non-negative, we may assume without loss of generality (by possibly removing objects from $\Gamma$) that the right-hand side of the inequality is strictly positive for all $\check{\gamma}\in\check{\Gamma}$.  If we define
\begin{equation*}
\varphi(\gamma) = \sum_{i=1}^k\varphi(t_i),
\end{equation*}
then the minimum $\rho$-length constraint can be written as
\begin{equation*}
1 \le \frac{\ell_\rho(\check{\gamma})}{1-\varphi(\gamma)} = \sum_{e\in E}\frac{\check{\mathcal{N}}(\check{\gamma},e)}{1-\varphi(\gamma)}\rho(e),
\end{equation*}
which provides the temporal usage matrix
\begin{equation*}
\mathcal{N}(\gamma,e) = 
\frac{\check{\mathcal{N}}(\check{\gamma},e)}{1-\varphi(\gamma)}.
\end{equation*}

\subsubsection{Per-object time penalization}

Alternatively, one might assign a time penalty to the object as a whole and combine this with $\rho$-length on the aggregated graph.  As an example, we could define a length of the form
\begin{equation*}
\ell_\rho(\gamma) = f(\ell_\rho(\check{\gamma}),\varphi(\gamma)),\qquad\text{where}\qquad
\varphi(\gamma) = \max_i\varphi(t_i).
\end{equation*}

Natural assumptions on $f$ would be non-negativity and monotonicity in each argument---larger $\rho$-length in the aggregated graph and larger time penalties both induce larger temporal $\rho$-length.  Moreover, in order to avoid the trivial minimizer $\rho^*\equiv 0$ and to ensure that the set of admissible densities is non-empty, we will assume the following.
\begin{enumerate}
    \item For all $\gamma\in\Gamma$, $f(0,\varphi(\gamma))<1$
    \item For all $\gamma\in\Gamma$, 
    $f(\ell,\varphi(\gamma))\ge 1$ for sufficiently large $\ell$.
\end{enumerate}
If we further require that $f$ be right continuous in its first argument, then to every $\gamma\in\Gamma$ we may assign a critical length $\ell^*(\gamma)>0$ so that
\begin{equation*}
    \ell_\rho(\gamma) \ge 1\quad\iff\quad
    \ell_\rho(\check{\gamma}) \ge \ell^*(\gamma).
\end{equation*}

In this case, the temporal usage matrix, $\mathcal{N}$, that we seek can be found from the minimum length inequalities, since $\ell_\rho(\gamma)\ge 1$ if and only if
\begin{equation*}
1 \le \frac{\ell_\rho(\check{\gamma})}{\ell^*(\gamma)}
= \sum_{e\in E}\frac{\check{\mathcal{N}}(\check{\gamma},e)}{\ell^*(\gamma)}\rho(e),
\end{equation*}
so we should choose
\begin{equation*}
\mathcal{N}(\gamma,e) = \frac{\check{\mathcal{N}}(\check{\gamma},e)}{\ell^*(\gamma)}.
\end{equation*}

In the particular case that $f$ is the multiplication operator, we obtain
\begin{equation}
\label{eq:N-per-obj-mult}
\mathcal{N}(\gamma,e) = \varphi(\gamma)\check{\mathcal{N}}(\check{\gamma},e).
\end{equation}
If $f$ is the addition operator, then the usage matrix is
\begin{equation*}
\mathcal{N}(\gamma,e) = 
\frac{\check{\mathcal{N}}(\check{\gamma},e)}
{1-\varphi(\gamma)},
\end{equation*}
under the additional assumption that $\varphi(\gamma)<1$ for all $\gamma\in\Gamma$.

\subsubsection{Some general assumptions}

All of the above possibilities can be realized by choosing $\mathcal{N}$ of the form
\begin{equation}\label{eq:generic-N}
\mathcal{N}(\gamma,e) =
\psi(\gamma,e)\check{\mathcal{N}}(\check{\gamma},e),
\end{equation}
where \begin{equation}\label{eq:temp-psi}
    \psi(\gamma,e) :=
    \begin{cases}
    \varphi(t) & \text{if }(e,t)\in\gamma,\\
    1 &\text{if }e\notin\check{\gamma}
    \end{cases}
\end{equation} performs some type of ``temporal weighting.'' For the remainder of this paper, we shall consider this as a general form of temporal usage matrix.

By making some basic assumptions on this function $\psi$, it is possible to translate a large portion of the modulus theory developed for non-temporal graphs to the temporal setting.  For the remainder of this paper, we shall make the following assumptions about $\psi$.

\begin{asm}
The function $\psi$ is \emph{universal} in the sense that it is defined for any set $\gamma$ of the form
\begin{equation*}
\gamma = \{(e_1,t_1),(e_2,t_2),\ldots,(e_k,t_k)\}\quad\text{with}\quad
e_i\in E\text{ and }t_i>0\text{ for }i=1,2,\ldots,k,
\end{equation*}
and any $e\in E$.  This allows us to consider $\psi$ separately from the particular choice of temporal object family $\Gamma$.
\end{asm}

\begin{asm}
The function $\psi$ is monotonic with respect to inclusions.  If $\gamma'\subseteq\gamma$, then $\psi(\gamma',e) \le \psi(\gamma,e)$.
\end{asm}

\begin{asm}
The function $\psi$ is monotonic with respect to time.  If $\gamma=\{(e_1,t_1),\cdots,(e_k,t_k)\}$ and $\gamma'=\{(e_1,t_1'),\cdots,(e_k,t_k')\}$ with $t_i'\le t_i$ for all $i$, then $\psi(\gamma',e) \le \psi(\gamma,e)$.
\end{asm}

\begin{asm}
If $\gamma^\lambda=\{(e_1,\lambda t_1),\cdots,(e_k,\lambda t_k)\}$ (parametrized by $\lambda>0$) then the function $\lambda\mapsto\psi(\gamma^\lambda,e)$ is continuous for each $e$ and
\begin{equation*}
\lim_{\lambda\to 0^+}\psi(\gamma^\lambda,e) = 1.
\end{equation*}
\end{asm}

For the usage examples described previously, these assumptions can be satisfied by suitable assumptions on the function $\varphi$.  In the following, we consider a generic set $\gamma=\{(e_1,t_1),\ldots,(e_k,t_k)\}$.

\paragraph{Multiplicative per-edge penalization.} In this case,
\begin{equation}\label{eq:mpe-penalty}
\psi(\gamma,e) =
\begin{cases}
\varphi(t) &\text{if }(e,t)\in\gamma,\\
1 &\text{if }e\notin\check{\gamma}.
\end{cases}
\end{equation}
The assumptions are met if $\varphi$ is a continuous, increasing function satisfying $\varphi(0)=1$.

\paragraph{Additive per-edge penalization.} In this case,
\begin{equation*}
\psi(\gamma,e) = 
\begin{cases}
\left(1 - \sum\limits_{i=1}^k\varphi(t_i)\right)^{-1}
&\text{if }e\in\check{\gamma},\\
1 &\text{if }e\notin\check{\gamma}.
\end{cases}
\end{equation*}
(Special care must be taken if the sum equals or exceeds 1.)
The assumptions are met if $\varphi$ is a continuous, increasing function satisfying $\varphi(0)=0$.

\paragraph{Multiplicative per-object penalization.} In this case,
\begin{equation}\label{eq:mpo-penalty}
\psi(\gamma,e) =
\begin{cases}
\max\varphi(t_i)&\text{if }e\in\check{\gamma},\\
1 &\text{if }e\notin\check{\gamma}.
\end{cases}
\end{equation}
The assumptions are met if $\varphi$ is a continuous, increasing function satisfying $\varphi(0)=1$.

\paragraph{Additive per-object penalization.} In this case,
\begin{equation*}
\psi(\gamma,e) = 
\begin{cases}
\left(1 - \max\varphi(t_i)\right)^{-1}
&\text{if }e\in\check{\gamma},\\
1 &\text{if }e\notin\check{\gamma}.
\end{cases}
\end{equation*}
(Special care must be taken if the maximum equals or exceeds 1.)
The assumptions are met if $\varphi$ is a continuous, increasing function satisfying $\varphi(0)=0$.

\subsection{Modulus of a family of temporal objects}

Using the procedure outlined above, we have now developed the concepts of density and energy on a temporal graph $G$ and the concept of usage on a temporal object $\gamma\in\Gamma$. With these definition in hand, the $p$-modulus of $\Gamma$ can be defined exactly as in~\eqref{eq:DefMod}.  In this way, we have incorporated all needed temporal information into the usage matrix $\mathcal{N}$ and, when it is convenient, we may think of $\Gamma$ as a family of objects on the aggregated graph $\check{G}$.  This allows properties of modulus to translate more or less directly from the non-temporal to the temporal case.

\subsection{A mass transport interpretation of modulus}

To help provide some intuition about what temporal modulus measures, we now describe a mass transport interpretation based on the probabilistic interpretation of modulus described in Section~\ref{sec:prob-interp}.  For this discussion, we focus on the family $\Gamma=\Gamma^{\trp}(s,t)$ of time respecting paths passing from a vertex $s$ to another vertex $t$ in a temporal graph $G$.  Note that, in this case, the projected family $\check{\Gamma}$ is a subset of $\Gamma(s,t)$, the family of paths connecting $s$ to $t$ in $\check{G}$.  This inclusion is strict if there are paths $\check{\gamma}\in\check{\Gamma}$ that cannot be traced while respecting the temporal constraint.  Moreover, $|\check{\Gamma}|\le|\Gamma|$, and this inequality is strict if there are multiple ways to trace some path while respecting time.  For a temporal usage matrix, we adopt an $\mathcal{N}$ of the form~\eqref{eq:generic-N}, with $\check{\mathcal{N}}$ as in~\eqref{eq:natural-N}.

Every pmf $\mu \in \mathcal{P}(\Gamma) $ can be thought of as a \emph{plan} for transporting one unit of mass along the time
respecting paths from $s$ to $t$. For any path $\gamma \in \Gamma$, $\mu(\gamma)$ provides the fraction of mass that should
be sent along $\gamma$. In this way, we can view temporal modulus within the class of optimal mass transport problems, with $p$, $\sigma$ and $\psi$ defining the fitness function as follows.

First, every plan incurs a \emph{per edge usage cost}
\begin{equation*}
\eta(e) = \sum_{\gamma\in\Gamma}\mathcal{N}(\gamma,e)\mu(\gamma)
= \sum_{\gamma\in\Gamma}\psi(\gamma,e)\check{\mathcal{N}}(\gamma,e)\mu(\gamma)
= \sum_{\gamma\in\Gamma_e}\psi(\gamma,e)\mu(\gamma),
\end{equation*}
where $\Gamma_e = \{\gamma\in\Gamma : e\in\check{\gamma}\}$ is the subfamily of paths that traverse edge $e$.  The quantity $\psi(\gamma,e)$ can be thought of as an amount of some resource per unit mass consumed when transporting this unit mass along edge $e$ of $\gamma$.  Examples of $\psi$ can be found in Section~\ref{sec:temporal-usage}.  Since the temporal penalty function $\varphi$ is assumed to be increasing, paths that cross edges a later times tend to consume more resource per unit mass than paths traversing edges at earlier times. The quantity $\eta(e)$, then, is the accumulated resource expenditure on edge $e$ required to enact the plan $\mu$.

The values $p$ and $\sigma$ then define the global fitness function $\mathcal{E}_{q,\hat{\sigma}}(\eta)$ that we seek to minimize by choosing the plan $\mu$. For example, if we simply wish to minimize the total resource expenditure (i.e., the sum of $\eta(e)$ across all edges),
this corresponds to setting $p=\infty$ and $\sigma \equiv 1$. On the other hand, if we wish to minimize the
maximum resource expenditure across all edges, then we would set $p=1$ and $\sigma \equiv 1$. More
generally, $\sigma(e)$ allows us to adjust the importance of edge $e$ within the fitness function and $p$
provides us a way of interpolating between the extremes of minimizing the sum of $\sigma^{-1}(e)\eta(e)$ and
minimizing the maximum of $\sigma^{-1}(e)\eta(e)$. In particular, 2-modulus corresponds to minimizing the
sum of $\sigma^{-1}(e)\eta(e)^2$
across all edges and bears a striking resemblance to effective resistance if we think of $\eta(e)$ as a generalized ``current'' flowing across edge $e$.

\subsection{Properties of temporal modulus}
\label{sec:properties}

In this section, we explore some of the properties of modulus as just defined.

\subsection{Properties inherited from static modulus}

As remarked earlier, a consequence of the preceding definition of temporal modulus is that all temporal information is encoded into $\Gamma$ and $\mathcal{N}$, allowing us to think of $\Gamma$ as a general family of objects on $\check{G}$.  Every row of $\mathcal{N}$ corresponds to some path $\check{\gamma}\in\Gamma(s,t)$, but with nonstandard edge usage induced by the temporal penalty.  If there are multiple ways to trace $\check{\gamma}$ while respecting time, there will be multiple corresponding rows in $\mathcal{N}$.  Similarly, not every path from $s$ to $t$ in $\check{G}$ is necessarily represented in $\mathcal{N}$.

A consequence of this is that the general theory of $p$-modulus can be directly applied to the temporal modulus problem.  As important examples, the following theorems are immediate (see~\cite{ProbInterp2016,Blocking2018}).

\begin{theorem}\label{thm:trp-monotone}
For $\Gamma=\Gamma^{\trp}(s,t)$, the function $p\mapsto\Mod_{p,\sigma}(\Gamma)$ is continuous on $[1,\infty)$.  Moreover, with
\begin{equation*}
\sigma(E) := \sum_{e\in E}\sigma(e),
\end{equation*}
the functions
\begin{equation*}
p\mapsto \phi(1)^p\Mod_{p,\sigma}(\Gamma)\qquad\text{and}\qquad
p\mapsto \left(
\sigma(E)^{-1}\Mod_{p,\sigma}(\Gamma)
\right)^{\frac{1}{p}}
\end{equation*}
are nonincreasing and nondecreasing respectively.
\end{theorem}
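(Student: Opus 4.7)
The plan is to observe that, by the very construction in Section~\ref{sec:temporal-usage}, the temporal $p$-modulus of $\Gamma^{\trp}(s,t)$ is literally the static $p$-modulus of the finite family $\Gamma$, viewed as a collection of rows of the usage matrix $\mathcal{N}$ from~\eqref{eq:generic-N} on the aggregated graph $\check{G}$. Both claims of the theorem are therefore special cases of known results for static $p$-modulus (see~\cite{ProbInterp2016,Blocking2018}), and the proof reduces to invoking those results with this particular $\mathcal{N}$.

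For the continuity of $p\mapsto\Mod_{p,\sigma}(\Gamma)$, I would first note that $\Adm(\Gamma)$ is independent of $p$, closed, convex, and nonempty (any sufficiently large constant density is admissible). The energy $\mathcal{E}_{p,\sigma}(\rho)$ is jointly continuous in $(p,\rho)$ on $[1,\infty)\times\mathbb{R}^E_{\ge 0}$, and on any bounded $p$-interval the optimizer $\rho^*_p$ lies in a fixed compact box because a single admissible density gives a uniform upper bound on $\Mod_{p,\sigma}(\Gamma)$. Upper semicontinuity is then immediate from the definition as an infimum over a $p$-independent set, while lower semicontinuity follows by extracting a convergent subsequence of optimizers along $p_n\to p_0$ and passing to the limit inside the continuous energy.

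For the monotonicity assertions, the main tool is H\"older's inequality. For $1\le p<q<\infty$ and any $\rho\in\Adm(\Gamma)$, applying H\"older with exponents $q/p$ and $q/(q-p)$ to the factorization $\sigma(e)\rho(e)^p=\bigl(\sigma(e)\rho(e)^q\bigr)^{p/q}\sigma(e)^{(q-p)/q}$ yields
\begin{equation*}
\mathcal{E}_{p,\sigma}(\rho)\le\mathcal{E}_{q,\sigma}(\rho)^{p/q}\sigma(E)^{(q-p)/q}.
\end{equation*}
Taking $\rho=\rho^*_q$, minimizing, and raising to the $1/p$ power give $\bigl(\sigma(E)^{-1}\Mod_{p,\sigma}(\Gamma)\bigr)^{1/p}\le\bigl(\sigma(E)^{-1}\Mod_{q,\sigma}(\Gamma)\bigr)^{1/q}$, which is the asserted nondecreasing property. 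For the first monotone function, the constant $\phi(1)$ plays the role that $L=\min_\gamma\sum_e\mathcal{N}(\gamma,e)$ plays in the unweighted static case, namely a universal lower bound on the row sums of the temporal usage matrix; the assumptions on $\psi$ guarantee a positive such bound for each of the concrete penalization schemes of Section~\ref{sec:temporal-usage}. A dual H\"older / power-mean estimate applied to $\mathcal{N}\rho\ge\ones$, exactly as in the static setting, then delivers $\phi(1)^q\Mod_{q,\sigma}(\Gamma)\le\phi(1)^p\Mod_{p,\sigma}(\Gamma)$ for $p\le q$.

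The principal obstacle is essentially bookkeeping: one must verify that the H\"older manipulations, the existence of an admissible density, and the uniform lower bound on the row sums used in the static proofs all survive the replacement of the $\{0,1\}$-valued matrix $\check{\mathcal{N}}$ by the $\psi$-weighted $\mathcal{N}$. The standing assumptions on $\psi$ (universality, monotonicity in both inclusion and time, and the normalization $\psi(\gamma^\lambda,\cdot)\to 1$ as $\lambda\to 0^+$) are precisely what make this translation routine, so the final proof consists of quoting the static theorems from~\cite{ProbInterp2016,Blocking2018} and checking that the parameters match on $\check{G}$ with the usage matrix~\eqref{eq:generic-N}.
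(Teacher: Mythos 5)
Your overall strategy matches the paper exactly: the paper gives no argument beyond declaring the theorem immediate once the temporal problem is recognized as a static modulus problem on $\check{G}$ with the usage matrix~\eqref{eq:generic-N}, and your reduction, your continuity argument (upper semicontinuity from the $p$-independent admissible set, lower semicontinuity from compactness of the optimizers), and your H\"older/power-mean proof that $p\mapsto\bigl(\sigma(E)^{-1}\Mod_{p,\sigma}(\Gamma)\bigr)^{1/p}$ is nondecreasing are all correct.

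There is, however, a genuine gap in your treatment of the first monotone function. The constant $\phi(1)$ does not play the role of a lower bound on the \emph{row sums} of $\mathcal{N}$, and no H\"older or power-mean estimate applied to $\mathcal{N}\rho\ge\ones$ will deliver the claim with the row-sum constant: the minimum row sum $L$ enters only the upper bound $\Mod_{p,\sigma}(\Gamma)\le\sigma(E)L^{-p}$ quoted in Section~\ref{sec:prob-interp}, not the monotonicity. Indeed the row-sum version is false. Take a single object using two edges with usages $1$ and $2$ and $\sigma\equiv 1$; then $\Mod_{p,1}=\bigl(1+2^{q}\bigr)^{-p/q}$, the row sum is $3$, and $3^{p}\Mod_{p,1}$ increases from $3/2$ at $p=1$ to $9/5$ at $p=2$ and onward toward $3\cdot 2^{-2/3}\approx 1.89$, so it is not nonincreasing. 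The correct constant is a positive lower bound $c$ on the nonzero \emph{entries} of $\mathcal{N}$ (for the multiplicative penalizations this is $\min\varphi(t)$ over the times actually used, which is bounded below by $\varphi(1)$ once the available times are at least $1$), and the correct mechanism is truncation, not H\"older: if $\rho\in\Adm(\Gamma)$ then $\tilde\rho:=\min(\rho,1/c)$ is still admissible, since any row on which truncation occurs contains an entry $\mathcal{N}(\gamma,e)\ge c$ multiplying $\tilde\rho(e)=1/c$; and for $1\le p\le p'$ one has $\mathcal{E}_{p',\sigma}(\tilde\rho)\le c^{\,p-p'}\mathcal{E}_{p,\sigma}(\tilde\rho)\le c^{\,p-p'}\mathcal{E}_{p,\sigma}(\rho)$ because $\tilde\rho\le 1/c$ pointwise, which yields $c^{\,p'}\Mod_{p',\sigma}(\Gamma)\le c^{\,p}\Mod_{p,\sigma}(\Gamma)$. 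You should replace the row-sum heuristic and the ``dual H\"older'' step with this truncation lemma; the bookkeeping that survives the passage from $\check{\mathcal{N}}$ to $\mathcal{N}$ is precisely the uniform positive lower bound on $\psi$, not any property of row sums.
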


\begin{theorem}\label{thm:temp-prob-interp}
Theorems~\ref{thm:prob-interp} and~\ref{thm:prob-interp-special} apply when $\Gamma=\Gamma^{\trp}(s,t)$, provided $\mathcal{N}$ is understood in the sense of~\eqref{eq:generic-N} and~\eqref{eq:temp-psi}.
\end{theorem}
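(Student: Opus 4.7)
The plan is to observe that Theorems~\ref{thm:prob-interp} and~\ref{thm:prob-interp-special}, as stated in the references, are completely abstract: they require only a finite family $\Gamma$, a nonnegative usage matrix $\mathcal{N}\in\mathbb{R}^{\Gamma\times E}_{\ge 0}$, and the definitions~\eqref{eq:energy}--\eqref{eq:DefMod} of energy and modulus. The proofs in~\cite{ProbInterp2016,Blocking2018} rest on Lagrangian duality applied to the convex program defining $\Mod_{p,\sigma}(\Gamma)$ and never appeal to a combinatorial interpretation of the rows of $\mathcal{N}$. Once the temporal setup is recast as a static modulus problem on the aggregated graph $\check{G}$ with a nonstandard usage matrix, the theorems should transfer verbatim; the proof becomes essentially a verification of hypotheses.

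My first step would be to package the temporal problem as a static one on $\check{G}$ by taking $\Gamma=\Gamma^{\trp}(s,t)$ and $\mathcal{N}(\gamma,e)=\psi(\gamma,e)\check{\mathcal{N}}(\check{\gamma},e)$ from~\eqref{eq:generic-N}--\eqref{eq:temp-psi}. I would then check the three bookkeeping facts needed to invoke the references: (i) $\Gamma$ is finite, which follows from the standing assumption that each set $T(e)$ is finite together with the finiteness of paths from $s$ to $t$ in $\check{G}$; (ii) every entry $\mathcal{N}(\gamma,e)$ is nonnegative, since $\psi\ge 0$ by construction and $\check{\mathcal{N}}$ takes values in $\{0,1\}$; and (iii) $\Adm(\Gamma)$ is nonempty, which follows from the fact that each $\gamma$ uses at least one edge with a strictly positive $\psi$-weight, so that the constant density $c\ones$ is admissible for $c$ sufficiently large.

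With these abstract hypotheses in place, I would invoke Theorem~\ref{thm:prob-interp} for $p\in(1,\infty)$ and Theorem~\ref{thm:prob-interp-special} for $p\in\{1,\infty\}$ directly with the pair $(\check{G},\mathcal{N})$. The formula $\eta=\mathcal{N}^T\mu$ from the references then unfolds in the temporal setting to
\begin{equation*}
\eta(e) = \sum_{\gamma\in\Gamma_e}\psi(\gamma,e)\mu(\gamma),
\end{equation*}
which is precisely the expected per-edge resource expenditure that appears in the mass-transport interpretation preceding the theorem; I would record this explicitly so the translation is visible to the reader. The identity relating the optimal $\rho^*$ and $\eta^*$ transports without modification, since it is a pointwise algebraic statement in the entries of $\mathcal{N}$ rather than a structural statement about objects.

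The only real obstacle is conceptual rather than technical: one must recognize that the entire probabilistic machinery of \cite{ProbInterp2016,Blocking2018} is insensitive to the combinatorial origin of $\mathcal{N}$, so the temporal weighting encoded in $\psi$ merely rescales the rows of the usage matrix without breaking any hypothesis of the underlying duality. No new variational argument is required, and the four assumptions made on $\psi$ in Section~\ref{sec:temporal-usage} (universality, inclusion monotonicity, time monotonicity, and continuity at $\lambda=0$) play no role here, only the nonnegativity that is implicit in the definition of $\psi$.
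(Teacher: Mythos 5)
Your proposal is correct and follows essentially the same route as the paper, which offers no separate argument but declares the result immediate once the temporal problem is recognized as a static modulus problem on $\check{G}$ with the nonstandard usage matrix $\mathcal{N}(\gamma,e)=\psi(\gamma,e)\check{\mathcal{N}}(\check{\gamma},e)$; your contribution is simply to make explicit the hypothesis checks (finiteness of $\Gamma$, nonnegativity of $\mathcal{N}$, nonemptiness of $\Adm(\Gamma)$) that the paper leaves implicit. One small refinement: the strict positivity of $\psi(\gamma,e)$ on edges of $\check{\gamma}$, which you need for admissibility, is most cleanly obtained from Assumptions 3 and 4 (monotonicity in time plus $\lim_{\lambda\to 0^+}\psi(\gamma^\lambda,e)=1$ force $\psi(\gamma,e)\ge 1$), so those assumptions are not entirely irrelevant here.
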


\begin{theorem}\label{thm:temp-sigma}
Let $\Gamma=\Gamma^{\trp}(s,t)$ and let $p\in(1,\infty)$.  For any $\sigma\in\mathbb{R}^E_{>0}$, let $\rho_\sigma^*$ be the unique optimal density for $\Mod_{p,\sigma}(\Gamma)$ and let $\eta_\sigma^*$ be the corresponding unique optimal edge usages from the associated mass transport problem.  Then the following are true.
\begin{enumerate}
\item The function $\sigma\mapsto\rho_\sigma^*$ is continuous.
\item The function $F(\sigma):=\Mod_{p,\sigma}(\Gamma)$ is concave and differentiable.  Its partial derivatives satisfy
\begin{equation*}
    \frac{\partial F}{\partial\sigma(e)} =
    \rho_\sigma^*(e)^p\qquad\text{for all }e\in E.
\end{equation*}
\item In each coordinate direction $e\in E$, the functions $\sigma\mapsto\Mod_{p,\sigma}(\Gamma)$ and 
$\sigma\mapsto\eta_\sigma^*(e)$ are nondecreasing while $\sigma\mapsto\rho_\sigma^*(e)$ nonincreasing.  In other words, increasing $\sigma$ on a single edge $e$ cannot decrease the modulus nor the value $\eta_\sigma^*(e)$ and cannot increase $\rho_\sigma^*(e)$.
\end{enumerate}
\end{theorem}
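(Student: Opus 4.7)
My plan is to handle the three parts in the logical order (2), then (1), then (3), since concavity of $F$ immediately gives its continuity on the open domain $\mathbb{R}^E_{>0}$, and this in turn is the key ingredient for the continuity of $\rho_\sigma^*$, which is then used to prove the differentiability claim in (2). Throughout, I rely on the fact that the admissible set $\Adm(\Gamma)$ and the feasible set of the dual problem in Theorem~\ref{thm:temp-prob-interp} are both independent of $\sigma$, so that standard parametric-convex-optimization tools apply.

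\textbf{Concavity, continuity, and the gradient formula.} Writing $F(\sigma) = \min_{\rho \in \Adm(\Gamma)} \sum_e \sigma(e) \rho(e)^p$, for each fixed $\rho$ the objective is linear in $\sigma$; the pointwise infimum of a family of affine functions is concave, which proves the concavity in (2). Since concave functions are continuous on the interior of their domains, $F$ is continuous on $\mathbb{R}^E_{>0}$. For (1), I would take $\sigma_n\to\sigma_0$, use $\rho^*_{\sigma_0}$ as a competitor to obtain a uniform $\ell^p$ bound on $\rho^*_{\sigma_n}$, extract a convergent subsequence whose limit lies in $\Adm(\Gamma)$ by closedness, and combine joint continuity of $(\sigma,\rho)\mapsto\mathcal{E}_{p,\sigma}(\rho)$ with the continuity of $F$ just established to conclude that every subsequential limit solves the $\sigma_0$-problem; the strict convexity of the $p$-norm for $p\in(1,\infty)$ guarantees uniqueness of the minimizer and therefore convergence of the full sequence. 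To recover the gradient formula, I fix $\sigma$ and a direction $v$, and for $h>0$ sandwich the difference quotient by using each of $\rho^*_\sigma$ and $\rho^*_{\sigma+hv}$ as a competitor in the other problem:
\begin{equation*}
\sum_e v(e)\rho^*_{\sigma+hv}(e)^p \;\le\; \frac{F(\sigma+hv)-F(\sigma)}{h} \;\le\; \sum_e v(e)\rho^*_\sigma(e)^p.
\end{equation*}
Letting $h\to 0^+$ and invoking the continuity of $\rho^*$ identifies the right directional derivative with $\sum_e v(e)\rho_\sigma^*(e)^p$; the same argument with $-v$ provides the matching left derivative, establishing Gâteaux differentiability and the claimed formula for $\nabla F$.

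\textbf{Monotonicity (part 3).} That $\Mod_{p,\sigma}(\Gamma)$ is nondecreasing in each coordinate is immediate from the gradient formula since $\rho_\sigma^*(e)^p\ge 0$. The nonincreasing behavior of $\rho_\sigma^*(e)$ in $\sigma(e)$ is a direct consequence of the concavity of $F$: its partial derivative $\rho_\sigma^*(e)^p$ must be nonincreasing in $\sigma(e)$, and since $\rho_\sigma^*(e)\ge 0$ the same holds for $\rho_\sigma^*(e)$ itself. The monotonicity of $\eta_\sigma^*(e^*)$ is the one claim that is not a direct concavity corollary, and here I would invoke the dual characterization from Theorem~\ref{thm:temp-prob-interp}: $\eta_\sigma^*$ is the unique minimizer of $\mathcal{E}_{q,\hat{\sigma}}(\eta)$ over the $\sigma$-independent feasible set $\{\mathcal{N}^T\mu:\mu\in\mathcal{P}(\Gamma)\}$. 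For $\sigma^{(1)},\sigma^{(2)}$ differing only at $e^*$ with $\sigma^{(2)}(e^*)>\sigma^{(1)}(e^*)$, each $\eta_{\sigma^{(i)}}^*$ is feasible in the other problem, and summing the two resulting optimality inequalities yields the standard cross-inequality
\begin{equation*}
\sum_e \bigl(\hat\sigma^{(2)}(e)-\hat\sigma^{(1)}(e)\bigr)\bigl(\eta_{\sigma^{(2)}}^*(e)^q - \eta_{\sigma^{(1)}}^*(e)^q\bigr) \;\le\; 0,
\end{equation*}
which collapses to $\bigl(\hat\sigma^{(2)}(e^*)-\hat\sigma^{(1)}(e^*)\bigr)\bigl(\eta_{\sigma^{(2)}}^*(e^*)^q - \eta_{\sigma^{(1)}}^*(e^*)^q\bigr)\le 0$; since $\hat\sigma^{(2)}(e^*)<\hat\sigma^{(1)}(e^*)$, this forces $\eta_{\sigma^{(2)}}^*(e^*)\ge\eta_{\sigma^{(1)}}^*(e^*)$. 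The main obstacle I anticipate is not conceptual but mainly careful bookkeeping in the $\eta$ step, since it requires pulling in the dual characterization rather than working directly with the densities.
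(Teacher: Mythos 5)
Your proposal is correct, but it is worth noting that the paper itself supplies no argument for this theorem: having arranged matters so that all temporal information is absorbed into the generalized usage matrix $\mathcal{N}$ of~\eqref{eq:generic-N}, the authors simply observe that $\Gamma$ is now an ordinary family of objects on the aggregated graph $\check{G}$ and declare the theorem ``immediate'' from the general static theory in the cited references. What you have done instead is reconstruct that general theory from scratch, and your reconstruction is sound: the concavity of $F$ as an infimum of functions linear in $\sigma$ over the $\sigma$-independent set $\Adm(\Gamma)$, the compactness-plus-uniqueness argument for continuity of $\rho^*_\sigma$, the two-sided competitor sandwich identifying the directional derivatives (a Danskin-type argument, legitimate here because the admissible set does not move with $\sigma$), the deduction of the diagonal monotonicity of $\rho^*_\sigma(e)$ from concavity of the one-variable restriction, and the cross-inequality for $\eta^*_\sigma(e)$ via the dual problem with conjugate weights $\hat\sigma(e)=\sigma(e)^{-q/p}$ (correctly using that increasing $\sigma(e^*)$ \emph{decreases} $\hat\sigma(e^*)$) are all standard and correctly executed. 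The only thing your write-up omits relative to the paper's framing is the observation that makes the citation legitimate in the first place --- that the temporal problem genuinely is an instance of the static one --- but since you prove everything directly from the optimization formulation, which is identical in both settings, nothing is lost. Your approach buys self-containedness at the cost of length; the paper's buys brevity at the cost of sending the reader to the references.
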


\subsection{Temporal scaling}

One might expect that, for a temporal version of modulus to be useful, it should be in some way comparable to the static version of modulus.  As an example, fix a family of connecting time respecting paths, $\Gamma=\Gamma^{\trp}(s,t)$, on a temporal graph along with a temporal penalty function $\phi$. We consider a family of temporal modulus problems $\Mod_{p,\sigma}^{\lambda}(\Gamma)$ parametrized by $\lambda>0$, which are identical except in the choice of temporal penalty function, $\phi^\lambda$, defined by $\phi^\lambda(t)=\phi(\lambda t)$. This, in turn, defines a parametrized family of $\rho$-lengths,
\begin{equation*}
\ell_\rho^\lambda(\gamma) = 
\sum_{i=1}^k\rho(e_i)\phi(\lambda t_i).
\end{equation*}
As $\lambda$ approaches zero, one might expect to obtain something that looks like a static modulus problem.  And, indeed, this is what happens.

\begin{lem}
\label{lem:param-bounds}
For the $\lambda$-parametrized problem described in this section, define
\begin{equation*}
    M = \max_{e\in E}\max T(e).
\end{equation*}
Then, for all $p\in[1,\infty)$,
\begin{equation*}
\frac{\Mod_{p,\sigma}(\check{\Gamma})}{\phi(\lambda M)^{p}} \le
\Mod_{p,\sigma}^{\lambda}(\Gamma) \le
\Mod_{p,\sigma}(\check{\Gamma}).
\end{equation*}
Also,
\begin{equation*}
\frac{\Mod_{\infty,\sigma}(\check{\Gamma})}{\phi(\lambda M)} \le
\Mod_{\infty,\sigma}^{\lambda}(\Gamma) \le
\Mod_{\infty,\sigma}(\check{\Gamma}).
\end{equation*}
\end{lem}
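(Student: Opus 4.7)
The plan is to establish both bounds by comparing the admissible sets of the $\lambda$-parametrized temporal modulus problem and the static modulus problem on $\check{\Gamma}$. Throughout, I will exploit the fact that, in the multiplicative per-edge setup used here, the temporal usage matrix entries differ from the static ones only by the factors $\phi(\lambda t_i)$, which are uniformly bounded between $\phi(0)=1$ and $\phi(\lambda M)$ because $\phi$ is increasing and continuous with $\phi(0)=1$, and because $t_i\le M$ for every edge-time pair appearing in any $\gamma\in\Gamma$.

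For the upper bound, I would take an optimal density $\check\rho^*\in\Adm(\check{\Gamma})$ for $\Mod_{p,\sigma}(\check{\Gamma})$ and show it is admissible for the temporal problem. Indeed, if $\gamma=\{(e_1,t_1),\dots,(e_k,t_k)\}\in\Gamma$, then its projection $\check\gamma\in\check{\Gamma}$ satisfies $\sum_{i=1}^k \check\rho^*(e_i)\ge 1$; since $\phi(\lambda t_i)\ge 1$, we get $\sum_{i=1}^k \phi(\lambda t_i)\check\rho^*(e_i)\ge 1$, which is precisely the condition $\mathcal{N}\check\rho^*\ge\ones$ for the $\lambda$-problem. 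Because the energy $\mathcal{E}_{p,\sigma}$ does not depend on $\lambda$, minimizing over the (possibly larger) temporal admissible set yields $\Mod_{p,\sigma}^{\lambda}(\Gamma)\le \mathcal{E}_{p,\sigma}(\check\rho^*)=\Mod_{p,\sigma}(\check{\Gamma})$.

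For the lower bound, I would take the optimal $\rho^*_\lambda$ for $\Mod_{p,\sigma}^\lambda(\Gamma)$ and show that the rescaled density $\phi(\lambda M)\rho^*_\lambda$ lies in $\Adm(\check{\Gamma})$. Admissibility for the temporal problem gives $\sum_{i=1}^k \phi(\lambda t_i)\rho^*_\lambda(e_i)\ge 1$ for each $\gamma\in\Gamma$; replacing each $\phi(\lambda t_i)$ by its uniform upper bound $\phi(\lambda M)$ yields $\sum_{i=1}^k \rho^*_\lambda(e_i)\ge \phi(\lambda M)^{-1}$, i.e.\ $\phi(\lambda M)\rho^*_\lambda$ satisfies the static admissibility inequality for every $\check\gamma\in\check{\Gamma}$ (using that every $\check\gamma$ is the projection of some $\gamma\in\Gamma$). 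Applying the positive homogeneity of $\mathcal{E}_{p,\sigma}$ of degree $p$ for $p\in[1,\infty)$ and of degree $1$ for $p=\infty$, I obtain
\begin{equation*}
\Mod_{p,\sigma}(\check{\Gamma})\le \mathcal{E}_{p,\sigma}\bigl(\phi(\lambda M)\rho^*_\lambda\bigr)=\phi(\lambda M)^{p}\,\Mod_{p,\sigma}^{\lambda}(\Gamma),
\end{equation*}
and analogously with exponent $1$ in the $p=\infty$ case, which is exactly the stated lower bound.

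There is no serious obstacle here; the only point requiring a little care is ensuring that the two inclusions between admissible sets correctly account for the projection from $\Gamma$ onto $\check{\Gamma}$ (so that one ranges over all $\check\gamma\in\check{\Gamma}$ rather than all $s$–$t$ paths in $\check G$) and that the scaling constant on the energy has the right exponent in the two separate cases $p<\infty$ and $p=\infty$.
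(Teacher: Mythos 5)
Your proof is correct and follows essentially the same route as the paper's: the upper bound comes from showing that the static optimal density remains admissible for the $\lambda$-problem since $\phi(\lambda t_i)\ge 1$, and the lower bound comes from rescaling the temporal optimal density by $\phi(\lambda M)$ (using that every $\check\gamma\in\check\Gamma$ is the projection of some $\gamma\in\Gamma$) and invoking the homogeneity of the energy. The points you flag as needing care are exactly the ones the paper's proof handles, so there is nothing to add.
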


\begin{proof}
First, let $\rho^*$ be an optimal density for $\Mod_{p,\sigma}(\check{\Gamma})$ and let $\gamma\in\Gamma^{\trp}(s,t)$.  Then, since $\rho^*$ is admissible for the family $\check{\Gamma}$,
\begin{equation*}
\ell_{\rho^*}^\lambda(\gamma)
= \sum_{i=1}^k\rho^*(e_i)\phi(\lambda t_i)
\ge \sum_{i=1}^k\rho^*(e_i)
= \ell_\rho(\check{\gamma})\ge 1
\end{equation*}
so $\rho^*$ is also admissible for the $\lambda$-parametrized problem.  Therefore,
\begin{equation*}
\Mod_{p,\sigma}^{\lambda}(\Gamma) \le
\mathcal{E}_{p,\sigma}(\rho^*)
= \Mod_{p,\sigma}(\check{\Gamma}).
\end{equation*}

Now, suppose that $\rho^*$ is optimal for $\Mod_{p,\sigma}^{\lambda}(\Gamma)$ and that $\check{\gamma}\in\check{\Gamma}$.  By definition, there is a time respecting path $\gamma\in\Gamma$ whose projection is $\check{\gamma}$.  Since $\rho^*$ is admissible for the $\lambda$-parametrized problem,
\begin{equation*}
\ell_{\rho^*}(\check{\gamma}) =
\sum_{i=1}^k\rho^*(e_i) =
\sum_{i=1}^k\frac{\phi(\lambda t_i)}{\phi(\lambda t_i)}\rho^*(e_i) \ge
\frac{1}{\phi(\lambda M)}
\sum_{i=1}^k\phi(\lambda t_i)\rho^*(e_i)
=\frac{\ell^{\lambda}_{\rho^*}(\gamma)}{\phi(\lambda M)}
\ge \frac{1}{\phi(\lambda M)}.
\end{equation*}
Thus, $\rho = \phi(\lambda M)\rho^*$ is admissible for the family $\check{\Gamma}$ and so
\begin{equation*}
\Mod_{p,\sigma}(\check{\Gamma})
\le \mathcal{E}_{p,\sigma}(\rho)
= \mathcal{E}_{p,\sigma}(\phi(\lambda M)\rho^*).
\end{equation*}
The theorem follows from the scaling properties of $\mathcal{E}_{p,\sigma}$ and the optimality of $\rho^*$.
\end{proof}

\begin{theorem}\label{thm:temp-mod-limit}
In the setting of Lemma~\ref{lem:param-bounds},
Let $p\in[1,\infty]$.  The function $\lambda\mapsto\Mod_{p,\sigma}^{\lambda}(\Gamma)$ is monotonically nonincreasing and
\begin{equation}\label{eq:param-mod-conv}
\lim_{\lambda\to 0^+}\Mod_{p,\sigma}^{\lambda}(\Gamma)
=\Mod_{p,\sigma}(\check{\Gamma}).
\end{equation}
Moreover, suppose $p\in(1,\infty)$ and let $\rho^\lambda$ be the unique optimal density for $\Mod_{p,\sigma}^{\lambda}(\Gamma)$ and $\check{\rho}$ the unique optimal density for $\Mod_{p,\sigma}(\check{\Gamma})$.  Then
\begin{equation}\label{eq:param-rho-conv}
\lim_{\lambda\to 0^+}\rho^\lambda = \check{\rho}.
\end{equation}
\end{theorem}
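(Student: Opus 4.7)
The plan is to address the three claims in sequence. For monotonicity, I would use that $\phi$ is increasing: for $0 < \lambda_1 < \lambda_2$ and every $\rho \ge 0$ and $\gamma \in \Gamma$, $\ell_\rho^{\lambda_1}(\gamma) \le \ell_\rho^{\lambda_2}(\gamma)$, so $\Adm^{\lambda_1}(\Gamma) \subseteq \Adm^{\lambda_2}(\Gamma)$ and hence $\Mod_{p,\sigma}^{\lambda_2}(\Gamma) \le \Mod_{p,\sigma}^{\lambda_1}(\Gamma)$. For the limit \eqref{eq:param-mod-conv}, I would squeeze using Lemma~\ref{lem:param-bounds}: under the continuity of $\phi$ and the normalization $\phi(0)=1$, the factor $\phi(\lambda M)$ tends to $1$ as $\lambda \to 0^+$, so both bounds collapse to $\Mod_{p,\sigma}(\check{\Gamma})$. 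The $p=\infty$ case is identical.

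For the convergence \eqref{eq:param-rho-conv} of optimal densities when $p \in (1,\infty)$, I would apply the standard compactness-plus-uniqueness argument. By Lemma~\ref{lem:param-bounds}, $\mathcal{E}_{p,\sigma}(\rho^\lambda) \le \Mod_{p,\sigma}(\check{\Gamma})$ uniformly in $\lambda$; since $\sigma > 0$ this bounds $\{\rho^\lambda\}$ in $\mathbb{R}^E$. From any sequence $\lambda_n \to 0^+$ I would extract a subsequence along which $\rho^{\lambda_n} \to \tilde{\rho}$. To check that $\tilde{\rho} \in \Adm(\check{\Gamma})$, I would pick any $\check{\gamma} \in \check{\Gamma}$, lift it to some $\gamma = \{(e_i,t_i)\}_{i=1}^k \in \Gamma$ (possible by the definition of $\check{\Gamma}$), and pass to the limit in the admissibility inequality
\begin{equation*}
1 \le \ell^{\lambda_n}_{\rho^{\lambda_n}}(\gamma) = \sum_{i=1}^k \rho^{\lambda_n}(e_i)\,\phi(\lambda_n t_i),
\end{equation*}
using $\phi(\lambda_n t_i) \to 1$. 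Continuity of $\mathcal{E}_{p,\sigma}$ combined with the already-established limit then yields $\mathcal{E}_{p,\sigma}(\tilde{\rho}) = \Mod_{p,\sigma}(\check{\Gamma})$, so $\tilde{\rho}$ is optimal for $\check{\Gamma}$. Strict convexity of the $p$-norm forces $\tilde{\rho} = \check{\rho}$, and since every sequence $\lambda_n \to 0^+$ admits such a subsequence with the same limit, the full net $\rho^\lambda$ converges to $\check{\rho}$.

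The step I expect to be the main obstacle is the passage to admissibility for the limit density: it is the only place where the special structure of the temporal problem (as opposed to generic finite-dimensional optimization) enters. It relies on the continuity and normalization of $\phi$ at $0$, together with the fact that every $\check{\gamma} \in \check{\Gamma}$ lifts to some $\gamma \in \Gamma$. Everything else reduces to boundedness from Lemma~\ref{lem:param-bounds}, continuity of $\mathcal{E}_{p,\sigma}$, and uniqueness of the $p$-modulus minimizer for $p \in (1,\infty)$.
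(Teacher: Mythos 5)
Your proof is correct. For the monotonicity and the limit \eqref{eq:param-mod-conv} you follow essentially the same path as the paper: nested admissible sets from the monotonicity of $\phi$, and a squeeze from Lemma~\ref{lem:param-bounds} using $\phi(\lambda M)\to\phi(0)=1$. Where you genuinely diverge is in the density convergence \eqref{eq:param-rho-conv}. The paper disposes of this in one line by citing Clarkson's inequalities together with energy convergence: since $\check{\rho}$ is admissible for every $\lambda$-problem (as shown in the proof of the lemma), the midpoint $\tfrac12(\rho^\lambda+\check{\rho})$ is too, and Clarkson's inequality bounds $\|\rho^\lambda-\check{\rho}\|^p$ by a multiple of $\Mod_{p,\sigma}(\check{\Gamma})-\Mod_{p,\sigma}^{\lambda}(\Gamma)$, which gives convergence with a rate. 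Your argument is the softer compactness-plus-uniqueness route: uniform boundedness from the lemma, extraction of a convergent subsequence, passage to the limit in the admissibility inequalities (correctly using that every $\check{\gamma}\in\check{\Gamma}$ lifts to some $\gamma\in\Gamma$ and that $\phi(\lambda_n t_i)\to 1$), identification of the limit as the unique minimizer by strict convexity, and the subsequence principle. Both are valid; yours is more elementary and self-contained (no external lemma needed), while the paper's yields a quantitative estimate of $\|\rho^\lambda-\check{\rho}\|$ in terms of the modulus gap. Your instinct that the admissibility-in-the-limit step is where the temporal structure enters is exactly right, and it is the step the Clarkson approach sidesteps by instead exploiting admissibility of $\check{\rho}$ for the perturbed problems.
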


\begin{proof}
The limit~\eqref{eq:param-mod-conv} follows directly from the lemma.  To obtain the monotonicity, let $\lambda_2\ge\lambda_1>0$ and let $\rho_1$ be an optimal density for $\Mod_{p,\sigma}^{\lambda_1}(\Gamma)$.  Suppose $\gamma\in\Gamma^{\trp}(s,t)$.  Then, since $\rho_1$ is admissible for the case $\lambda=\lambda_1$,
\begin{equation*}
1 \le \ell_{\rho_1}^{\lambda_1}(\gamma) =
\sum_{i=1}^k\rho_1(e_i)\phi(\lambda_1 t_i) \le
\sum_{i=1}^k\rho_1(e_i)\phi(\lambda_2 t_i) =
\ell_{\rho_1}^{\lambda_2}(\gamma),
\end{equation*}
so $\rho_1$ is also admissible in the case $\lambda=\lambda_2$. Therefore,
\begin{equation*}
\Mod_{p,\sigma}^{\lambda_2}(\Gamma) \le
\mathcal{E}_{p,\sigma}(\rho_1)
= \Mod_{p,\sigma}^{\lambda_1}(\Gamma).
\end{equation*}
The pointwise convergence in~\eqref{eq:param-rho-conv} is a consequence of Clarkson's inequalities and the energy convergence (see~\cite[Lemma~6.2]{albin2015modulus}).
\end{proof}

\section{Examples}
\label{sec:examples}
The following examples demonstrate the ability for $p$-modulus to incorporate temporal information.  In the examples, we will consider the two types of temporal usage matrix, those corresponding to the the per-edge multiplicative penalty~\eqref{eq:mpe-penalty} and the multiplicative per-object penalty~\eqref{eq:mpo-penalty}.  Further examples can be found in~\cite{mikheev2020modulus}.

\begin{exmp}
\label{ex:multiedge}

\begin{figure}
\centering
\includegraphics[scale=0.3]{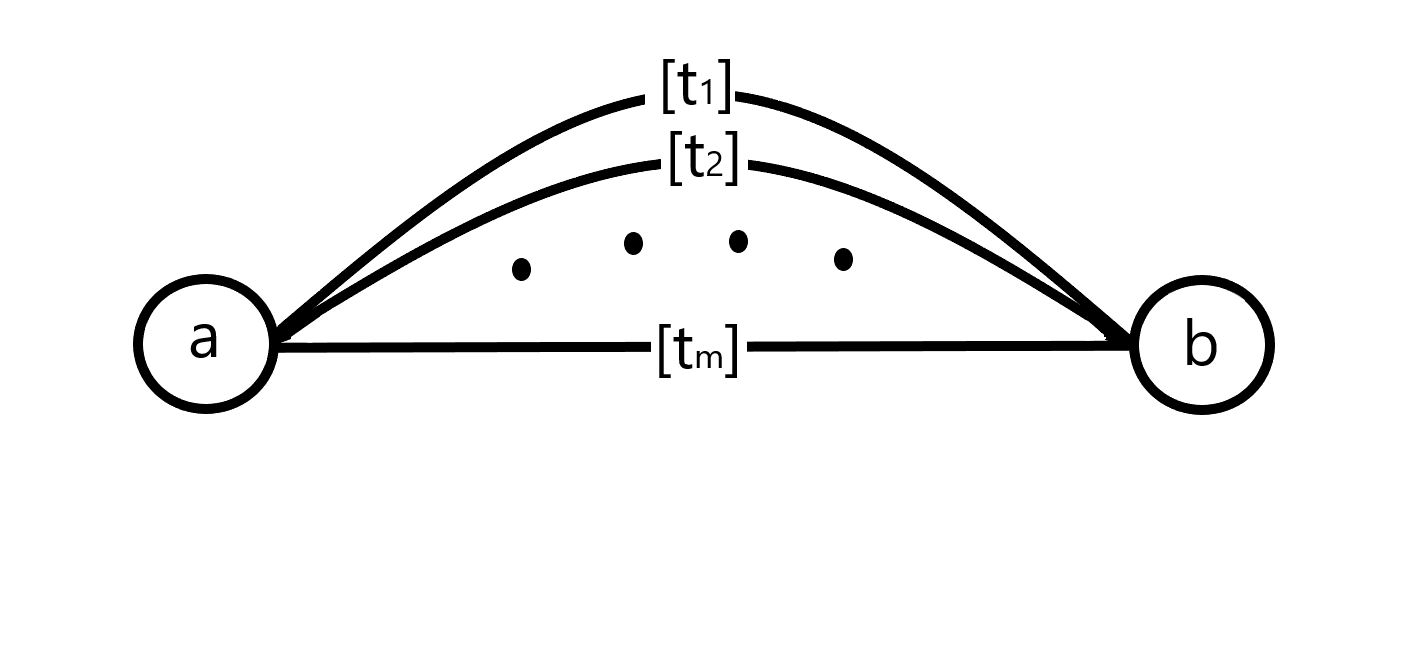}
\caption{Temporal graph for Example~\ref{ex:multiedge}.}
\label{fig:multiedge}
\end{figure}

Consider an undirected graph $G$ with two vertices, $a$ and $b$, and a collection of parallel edges $\{e_1,e_2,\ldots,e_m\}$, each with a single available time $T(e_i)=\{t_i\}$ (see Figure \ref{fig:multiedge}).  In this case,~\eqref{eq:mpe-penalty} and~\eqref{eq:mpo-penalty} give the same usage matrix.  The length constraints on $\rho$ are
\begin{equation*}
    \varphi(t_i)\rho(e_i) \ge 1\quad i=1,2,\ldots,m.
\end{equation*}
Since we seek to minimize $\mathcal{E}_{p,\sigma}(\rho)$, it is straightforward to check that $\rho^*(e_i)=\varphi(t_i)^{-1}$ is an optimal density, so
\begin{equation*}
\Mod_{p,\sigma}(\Gamma^{\trp}(a,b))
=
\begin{cases}
\sum\limits_{i=1}^m\frac{\sigma(e_i)}{\varphi(t_i)^p}
&\text{if }p<\infty,\\
\max\limits_i\frac{\sigma(e_i)}{\varphi(t_i)}
&\text{if }p=\infty.
\end{cases}
\end{equation*}
Each object in this example is a single edge-time pair, $\gamma_i=\{(e_i,t_i)\}$ for $i=1,2,\ldots,m$.  From Theorem~\ref{thm:prob-interp} one can see that the optimal mass transport plan when $p\in(1,\infty)$ assigns $\gamma_i$ a mass proportional to $\sigma(e_i)\varphi(t_i)^{-\frac{p}{q}}$.  For $p\approx 1$, this mass is primarily determined by the relative sizes of the $\sigma(e_i)$---larger capacity edges will be assigned more mass regardless the availability time.  For $p\gg 1$, it is primarily determined by the relative sizes of the $\varphi(t_i)$---edges that are available at early times will be assigned more mass regardless their capacity.
\end{exmp}

\begin{exmp}
\label{ex:consec}
Consider the undirected path graph $P_n$ with $n$ vertices and $\ell=n-1$ edges $\{e_1,e_2,\ldots,e_\ell\}$ which, when traversed in order, connect the vertex $a$ to $b$.  Assume that each edge has a single available time $T(e_i)=\{t_i\}$ and that $t_1<t_2<\cdots<t_\ell$ so that $\Gamma^{\trp}(a,b)$ contains a single time respecting path $\gamma=\{(e_1,t_1),(e_2,t_2),\ldots,(e_\ell,t_\ell)\}$.  Since there is a single path, the optimal mass transport plan is to assign all mass to this path.  Thus, the optimal expected edge usage is $\eta^*(e)=\mathcal{N}(\gamma,e)=\psi(\gamma,e)$.  For $p\in(1,\infty)$, Theorem~\ref{thm:prob-interp} shows that
\begin{equation*}
\Mod_{p,\sigma}(\Gamma^{\trp}(a,b))
= \left(
\sum_{i=1}^\ell\hat{\sigma}(e_i)\psi(\gamma,e_i)^q
\right)^{-\frac{p}{q}}.
\end{equation*}
If we choose per-edge temporal penalization, then
\begin{equation*}
\Mod_{p,\sigma}(\Gamma^{\trp}(a,b))
= \left(
\sum_{i=1}^\ell\hat{\sigma}(e_i)\varphi(t_i)^q
\right)^{-\frac{p}{q}}.
\end{equation*}
On the other hand, if we choose per-object temporal penalization, then
\begin{equation*}
\Mod_{p,\sigma}(\Gamma^{\trp}(a,b))
= \varphi(t_\ell)^{-p}\left(
\sum_{i=1}^\ell\hat{\sigma}(e_i)
\right)^{-\frac{p}{q}}.
\end{equation*}
Modulus decreases as the available times of the edges in the path increase.

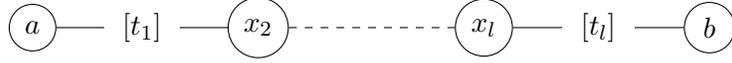
\begin{figure}
\centering
\begin{tikzpicture}[scale=3]
\foreach \name/\x/\y in {a/0/0,x_2/1/0,x_{l}/2/0,b/3/0} {
	\node[draw,circle] (\name) at (\x,\y) {$\name$};
}
 \foreach \u/\v/\t in {{a/x_2/{[t_1]}},{x_{l}/b/[t_{l}]}} {
 	\draw (\u) -- node[circle,fill=white] {$\t$} (\v);
}
 \foreach \u/\v in {x_2/x_{l}}{
 \draw[dashed] (\u) -- (\v);
 }
\end{tikzpicture}
\caption{Temporal graph for Example~\ref{ex:consec}.}
\label{fig:consec}
\end{figure}
\end{exmp}


\begin{exmp}
\label{ex:paral}
As a temporal version of the example in Section~\ref{sec:mod-example} (see Figure \ref{fig:paral}), consider $k$ identical copies of the path graph described in the previous example, all glued together at common end vertices $a$ and $b$.  From the symmetry of the configuration, it is evident that each of the $k$ paths will be assigned a mass of $1/k$ by the optimal transport plan, so for $e\in\gamma$,  $\eta^*(e)=\psi(\gamma,e)/k$.  Applying Theorem~\ref{thm:prob-interp} once again shows that
\begin{equation*}
\Mod_{p,\sigma}(\Gamma^{\trp}(a,b))
= \left(
k\sum_{i=1}^r \hat{\sigma}(e_i)\frac{\psi(\gamma,e_i)^q}{k^q}
\right)^{-\frac{p}{q}}.
\end{equation*}
If $\sigma\equiv 1$, then per-edge penalization and per-object penalization respectively yield
\begin{equation*}
\Mod_{p,1}(\Gamma^{\trp}(a,b))
=
k
\left(
\sum_{i=1}^r\varphi(t_i)^q
\right)^{-\frac{p}{q}}
\quad\text{and}\quad
\Mod_{p,1}(\Gamma^{\trp}(a,b))
= \frac{k}{r^{p-1}}\varphi(t_r)^{-p},
\end{equation*}
showing the triple interaction among the number of paths, the (graph) lengths of the paths, and the time required to traverse the paths on the temporal graph.

Since the modulus is continuous in $p$ (Theorem~\ref{thm:trp-monotone}) and since $\varphi$ is non-decreasing, $1$-modulus for both per-edge penalization and per-object one is
$$
{\rm Mod}_{1,1}(\Gamma^{\rm trp}(a,b))
=
\lim_{p\rightarrow 1 } {\rm Mod}_{p,1}(\Gamma^{\rm trp}(a,b))= k\varphi(t_r)^{-1},
$$
Similarly, $\infty$-modulus for per-edge penalization is 
$$
{\rm Mod}_{\infty,1}(\Gamma^{\rm trp}(a,b))
=
\lim_{p\rightarrow \infty } {\rm Mod}_{p,1}(\Gamma^{\rm trp}(a,b))^{\frac{1}{p}}= \left(\sum_{i=1}^r\varphi(t_i)\right)^{-1}.
$$
and for object-edge penalization is
$$
{\rm Mod}_{\infty,1}(\Gamma^{\rm trp}(a,b))
=
\lim_{p\rightarrow \infty } {\rm Mod}_{p,1}(\Gamma^{\rm trp}(a,b))^{\frac{1}{p}}= \frac{1}{r}\varphi(t_r)^{-1}.
$$

The optimal mass transport plan for the dual problem is the vector \ $\displaystyle \mu^*=\frac{1}{k} \mathbf{1}$; the unit mass is split evenly into $k$ parts, with each portion transported across one of the parallel paths.

\begin{figure}
\centering
\begin{tikzpicture}[scale=3]
\foreach \name/\x/\y in {a/0/0,x_{11}/0.85/0.72,x_{1r-1}/2.85/0.72,x_{2r-1}/2.85/0,x_{21}/0.85/0,b/4.2/0, x_{k1}/0.85/-0.72, x_{kr-1}/2.85/-0.72}{
	\node[draw,circle] (\name) at (\x,\y) {$\name$};
}
\foreach \u/\v/\t in {a/x_{11}/[t_1],{x_{1r-1}/b/[t_r]},a/x_{21}/[t_1],x_{2r-1}/b/[t_r],a/x_{k1}/[t_1],x_{kr-1}/b/[t_r]} {
	\draw (\u) -- node[circle,fill=white] {$\t$} (\v);
}
\foreach \u/\v in {x_{11}/x_{1r-1},x_{21}/x_{2r-1},x_{k1}/x_{kr-1}}{
\draw[dashed] (\u) -- (\v);
}
\end{tikzpicture}
\caption{Temporal graph for Example~\ref{ex:paral}.}
\label{fig:paral}
\end{figure}
\end{exmp}


\begin{exmp}
\label{ex:pawgraph}
\begin{figure}
\centering
\begin{tikzpicture}[scale=4]
\foreach \name/\x/\y in {a/0/0,b/0.5/0.71,t/1/0,s/-1/0} {
	\node[draw,circle] (\name) at (\x,\y) {$\name$};
}
\foreach \u/\v/\t in {s/a/[1],a/b/[2],{b/t/[T']},a/t/[T]} {
	\draw (\u) -- node[circle,fill=white] {$\t$} (\v);
}
\end{tikzpicture}
\caption{Temporal graph for Example~\ref{ex:pawgraph}.}
\label{fig:pawgraph}
\end{figure}
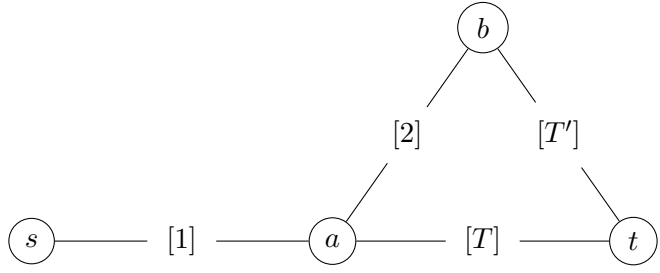

Next, consider the $2$-modulus of the set of time-respecting paths $\Gamma$ from $s$ to $t$ in the graph on Figure \ref{fig:pawgraph}, where all weights on the edges are assumed to be 1, and $\varphi(t)$ is a non-decreasing positive per-object penalization.

For convenience, we define $\varphi(T)=:\alpha$, \ $\varphi(T')=:\beta$. Assuming that $T'>2$, the usage matrix $\mathcal{N}$ can be written as
$$
\mathcal{N} =  \begin{bmatrix}
    \alpha & \alpha & 0 & 0 \\
    \beta & 0 & \beta & \beta 
  \end{bmatrix}.
$$
(The first row of $\mathcal{N}$ corresponds to the path from $s$ to $a$ to $t$ and the second row corresponds to the other time-respecting path.)

Let $\mu= \begin{bmatrix}
    \delta \\
    1-\delta
    \end{bmatrix}$
, be an arbitrary plan parameterized by $0\leq \delta \leq 1$.
Then
$$
{\rm Mod}_{2,1}(\Gamma^{\rm trp}(s,t))=\left( \min_{\mu \in \mathcal{P}(\Gamma)} (\mu^T \mathcal{N} \mathcal{N}\mu)\right)^{-1} 
$$
\begin{equation}
\label{eq:Mod2Pawgraph}
    = \left( \min_{\mu \in \mathcal{P}(\Gamma)} ( \mu^T \begin{bmatrix}
    2\alpha^2 & \alpha \beta \\
    \alpha \beta & 3\beta^2
    \end{bmatrix} \mu )\right)^{-1}= \left( \min_{\delta \in [0,1]} (  2 \alpha^2 \delta^2 + 2\alpha \beta (1-\delta)\delta + (1-\delta)^2 3 \beta^2\right)^{-1}.
\end{equation}
If $\alpha<\beta/2$ or $\alpha>3\beta$, the optimal value of $\delta$ will occur at one of the two endpoints, implying that the optimal plan uses only one of the two paths.  For $\alpha$ between $\beta/2$ and $3\beta$, the optimal value of $\delta$ is
$$
\delta^* = \frac{\beta(3\beta - \alpha )}{2\alpha^2 - 2\alpha \beta + 3 \beta^2}.
$$
Thus, we find
\begin{equation*}
\Mod_{2,1}(\Gamma^{\trp}(s,t)) =
\begin{cases}
\hspace{3em}(2\alpha^2)^{-1} & \text{if }\alpha < \frac{\beta}{2},\\
\hspace{3em}(3\beta^2)^{-1} & \text{if }3\beta < \alpha,\\
\frac{(2\alpha^2-2\alpha\beta+3\beta^2)^2}
{\alpha^2\beta\left(
15\beta^3 - 16 \alpha\beta^2 + 24\alpha^2\beta - 4\alpha^3
\right)} &\text{ if } \frac{\beta}{2} \le \alpha \le 3\beta.
\end{cases}
\end{equation*}
This example exhibits an interesting behavior.  If either of the two paths is excessively long (in the temporal sense) relative to the other, it is effectively omitted by temporal modulus.
\end{exmp}

\begin{exmp}
\label{ex:Big}
As a final example, we consider the temporal modulus of time respecting paths on the \emph{CollegeMsg} network from the SNAP network collection~\cite{snapnets}.  The original data is a set of tuples $(u,v,t)$ where $u$ and $v$ are users of an online social network and $t$ is a time at which $u$ sent a private message to $v$.  (The time stamps in the data set are Unix epoch seconds.)  We have converted this to a directed graph with an edge $(u,v)$ marking that user $u$ sent user $v$ at least one private message and with $T((u,v))$ containing the set of time stamps of all such messages.  Intuitively, a time respecting path in this graph provides a potential pathway through which information could have flowed from the source user to the target user, so one might think of $\Mod_{2,1}(\Gamma^{\trp}(s,t))$ as a measure of the potential social influence user $s$ may have had on $t$.

From this graph, we extracted the largest weakly connected component, leaving us with a directed temporal graph containing 1,893 vertices and 20,292 edges representing a total of 59,831.  Next, we selected two vertices $v_1$ and $v_2$ (vertices numbered 425 and 501 in the data set) with the property that the two vertices were somewhat distant in the network.  (The shortest time respecting path in either direction requires at least 3 hops.)   We then used the basic algorithm described in~\cite{albin2015modulus} to approximate $\Mod_{2,1}(\Gamma^{\trp}(v_1,v_2))$ and $\Mod_{2,1}(\Gamma^{\trp}(v_2,v_1))$.  For these calculations, we used per-edge penalization.  To introduce a time penalty, we used a normalized exponential of the form
\begin{equation*}
\varphi(t) = \exp\left(10^{-7}(t-t_0)\right),
\end{equation*}
where $t_0$ is earliest timestamp found among all edges leaving the source vertex.  The approximate modulus in these two cases is
\begin{equation*}
\Mod_{2,1}(\Gamma^{\trp}(v_1,v_2)) \approx 1.41,\quad
\Mod_{2,1}(\Gamma^{\trp}(v_2,v_1)) \approx 1.87.
\end{equation*}
The potential for information transfer seems to be slightly biased in favor of information flowing from $v_2$ to $v_1$.

\begin{figure}
    \centering
    \includegraphics[width=\textwidth]{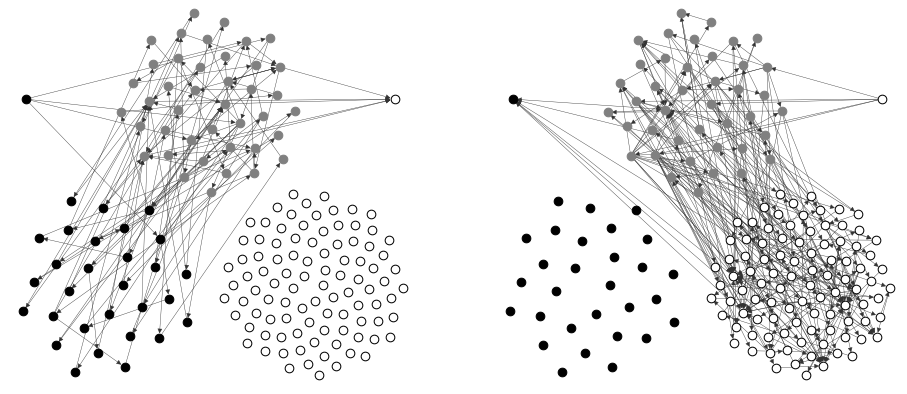}
    \caption{Visualization of email pathways from Example~\ref{ex:Big}.}
    \label{fig:collegemsg}
\end{figure}

Figure~\ref{fig:collegemsg} provides an attempt at visualizing the information provided by modulus.  In order to produce the figures, we first computed the modulus approximations along with approximations to the optimal plans $\mu_1$ and $\mu_2$ (in the mass transport interpretation) for $\Mod_{2,1}(\Gamma^{\trp}(v_1,v_2))$ and $\Mod_{2,1}(\Gamma^{\trp}(v_2,v_1))$ respectively.  Next, we ordered the time respecting paths in $\Gamma^{\trp}(v_1,v_2)$ by decreasing value of $\mu_1$, and similarly for $\Gamma^{\trp}(v_2,v_1)$ and $\mu_2$.  Next, we let $\Gamma_1\subset\Gamma^{\trp}(v_1,v_2)$ be the 33 paths with largest $\mu_1$-mass, and we let $\Gamma_2\subset\Gamma^{\trp}(v_2,v_1)$ be the 88 paths with largest $\mu_2$-mass.  These subsets were chosen so that $\mu_i(\Gamma_i)\approx 0.5$ for $i=1,2$.  In other words, we have chosen the smallest subsets $\Gamma_i$ such that the respective mass transport plan sends at least half of the total mass along these paths.

In both sub-figures of Figure~\ref{fig:collegemsg}, the black vertex on the upper far left represents $v_1$ and the white vertex on the upper far right represents $v_2$.  The remaining black vertices are vertices that participate in paths from $\Gamma_1$ but not in paths from $\Gamma_2$.  Similarly, the remaining white vertices participate in paths from $\Gamma_2$ but not $\Gamma_1$.  The gray vertices participate in paths from both sets.  On the left side of the figure, we have drawn the union of all edges from paths in $\Gamma_1$ and on the right we have drawn the union of all edges from paths in $\Gamma_2$.  It's not possible to observe the temporal information in these plots, there is a significant difference in diversity between the paths of $\Gamma_1$ and the paths of $\Gamma_2$, which reflects the difference in the two modulus values.

\begin{figure}
    \centering
    \includegraphics[width=0.9\textwidth]{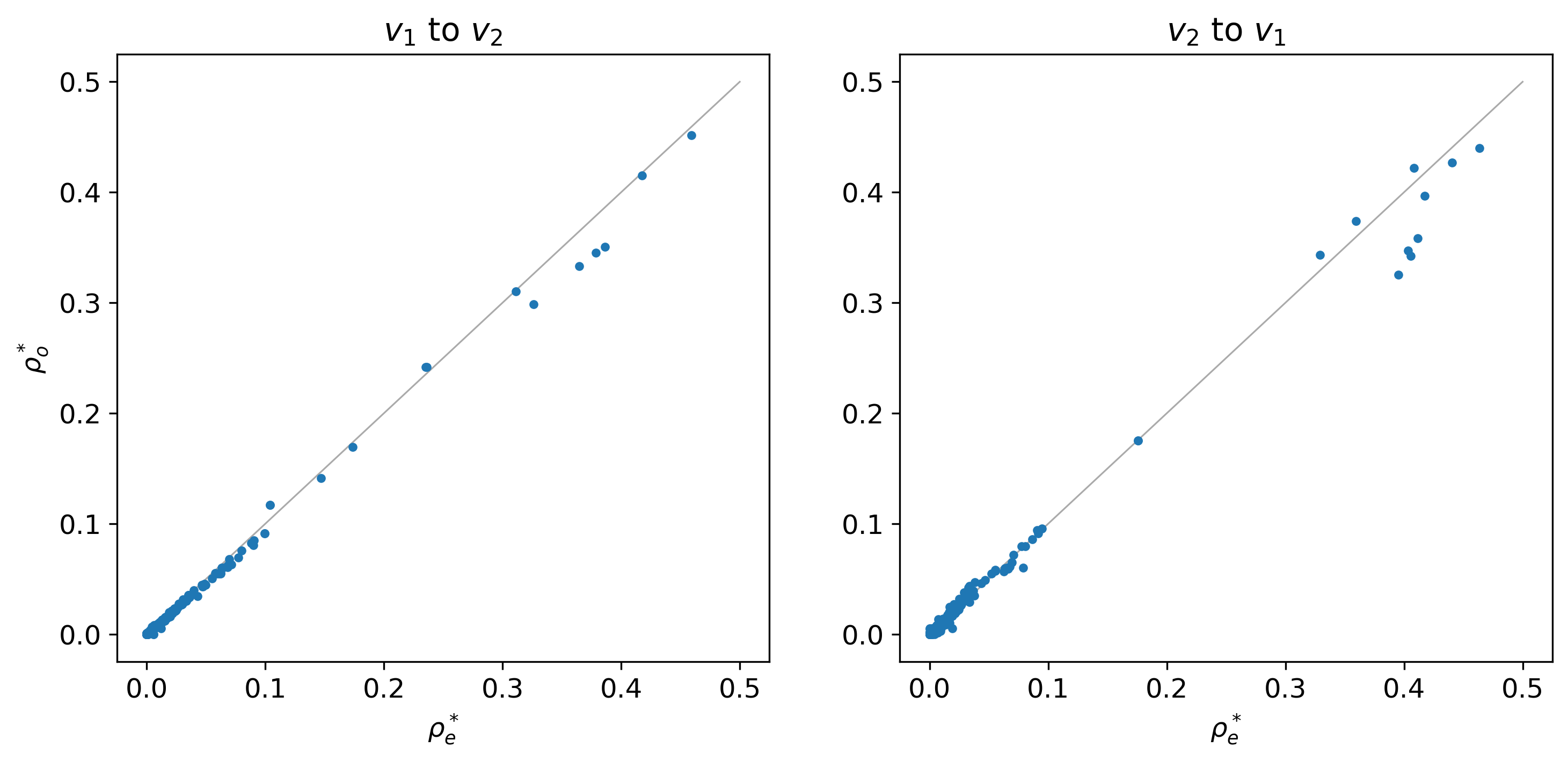}
    \caption{Comparison of the optimal densities for $\Mod_{2,1}(\Gamma^{\trp}(v_1,v_2))$ (left) and $\Mod_{2,1}(\Gamma^{\trp}(v_2,v_1))$ (right) using per-edge or per-object temporal penalization.  Each edge is plotted with $x$ coordinate given by the value of $\rho^*$ using per-edge penalization and $y$ coordinate given by the corresponding value using per-object penalization.}
    \label{fig:rho-star-comp-ex}
\end{figure}

Per-object penalization gives similar results, but with the two path families taking somewhat smaller modulus values:
\begin{equation*}
\Mod_{2,1}(\Gamma^{\trp}(v_1,v_2)) \approx 1.29,\quad
\Mod_{2,1}(\Gamma^{\trp}(v_2,v_1)) \approx 1.67.
\end{equation*}
This difference can be explained by the fact that the usage matrix in the per-edge case takes values that are bounded above by the corresponding entries in the per-object usage matrix, leading to smaller values of $\rho^*$ in the latter case.  Figure~\ref{fig:rho-star-comp-ex} provides a visualization of this fact.  On most (but not all) edges, the optimal density for modulus with per-edge penalization is larger than the corresponding value with per-object penalization.
\end{exmp}

\newpage
\section{Conclusion}

We have defined a version of $p$-modulus for a class of object families on temporal graphs that includes families of time respecting paths.  By transforming the temporal modulus problem into a generalized modulus problem on an aggregate (static) graph, we have shown that much of the established theory of $p$-modulus can be transferred directly to temporal modulus.  Through a set of examples, we have demonstrated that this new concept of modulus is able to capture not only the sizes and diversity of the objects in the family, but also the temporal data included in these objects.

\section*{Acknowledgments}
This material is based upon work supported by the National Science Foundation under Grant No.~1515810.

\bibliographystyle{acm}
\bibliography{references}

\end{document}